\numberwithin{equation}{section}
\newtheorem{definition}{Definition}
\newtheorem{theorem}{Theorem}
\newtheorem{lemma}{Lemma}
\newtheorem{remark}{Remark}
\newenvironment{proof}{\noindent {\textbf{Proof.}}}
\begin{document}
	\let\WriteBookmarks\relax
	\def\floatpagepagefraction{1}
	\def\textpagefraction{.001}
	\let\printorcid\relax

	\shorttitle{Fast convergence of dynamical systems with implicit Hessian damping and Tikhonov regularization} 
	
	\shortauthors{H.L. Li, X. He, Y.B. Xiao}  
	
	\title[mode = title]{Fast convergence of primal-dual dynamical systems with implicit Hessian damping and Tikhonov regularization}

	\author[1]{Hong-lu Li}\ead{lihonglu_7988@163.com}
	\author[2]{Xin He}\ead{hexinuser@163.com}\cormark[1]
	\author[1]{Yi-bin Xiao}\ead{xiaoyb9999@hotmail.com} 
	\cortext[1]{Corresponding author}

	\address[1]{Department of Mathematics, University of Electronic Science and Technology of China, Chengdu, Sichuan, P.R. China}
	
		\address[2]{School of Science, Xihua University, Chengdu, Sichuan, P.R. China}
	\nonumnote{}
	
	\begin{abstract}
		This paper proposes novel primal-dual dynamical systems for solving linear equality constrained convex optimization. First, we introduce a primal-dual dynamical system with implicit Hessian damping, which can neutralize the transversal oscillations without requiring computation of the Hessian matrix. We establish the fast convergence properties of the proposed dynamical system under suitable conditions. Furthermore, we incorporate a Tikhonov regularization term and prove that the resulting trajectories converge strongly to the minimum norm solution. Numerical experiments are conducted to validate the theoretical findings.
	\end{abstract}

	\begin{keywords}
		Linear equality constrained convex optimization\sep Implicit Hessian damping\sep Tikhonov regularization\sep Convergence properties
	\end{keywords}

	\maketitle

	\section{Introduction}
	\subsection{Problem statement}
Let $\mathcal{X}$ and $\mathcal{Y}$ be two real Hilbert spaces with the inner product $\langle\cdot,\cdot\rangle$ and its induced norm $\Vert \cdot \Vert$. In this paper, we consider the linear equality constrained optimization problem:
\begin{equation}\label{equ1}
	\begin{aligned}
		&\min_{x\in\mathcal{X}}\;\;f(x),\\
		&\;{\rm{s.t.}} \;Ax=b,
	\end{aligned}
\end{equation}
where $f:\mathcal{X}\rightarrow\mathbb{R}$ is a continuously differentiable convex function,  $A:\mathcal{X}\rightarrow\mathcal{Y}$ is a continuous linear operator, and $b\in\mathcal{Y}$. The set of feasible points of the problem \eqref{equ1} is denoted by $\mathbb{F}$, i.e. $\mathbb{F}:=\left\{x\in\mathcal{X}: Ax=b\right\}$.
The problem \eqref{equ1} has attracted significant research interest in recent years due to its wide applicability in various scientific and engineering domains. The problem \eqref{equ1} arises naturally in image recovery \cite{bib28}, machine learning \cite{bib26,bib27}, global consensus problems \cite{bib6,bib7}, compressive sensing \cite{bib8,bib9}, and transportation systems \cite{bib53,bib54}.

Consider the saddle point problem associated with the linear equality constrained convex optimization problem \eqref{equ1}:
\begin{equation*}
	\min_{x\in \mathcal{X}}\max_{\lambda\in\mathcal{Y}}\mathcal{L}(x,\lambda),
\end{equation*}
where the Lagrangian function  $\mathcal{L}: \mathcal{X}\times\mathcal{Y}\rightarrow\mathbb{R}$  is defined by
\[\mathcal{L}(x, \lambda): =f(x)+\langle \lambda, Ax-b\rangle.\]
We denote by $\Omega$ the set of saddle points $\mathcal{L}$, i.e., $(x^*,\lambda^*)\in \Omega$ if and only if for any $(x,\lambda)\in\mathcal{X}\times\mathcal{Y}$, 
\[\mathcal{L}(x^*,\lambda)\leq\mathcal{L}(x^*,\lambda^*)\leq\mathcal{L}(x,\lambda^*).\]
In this case, $x^*\in\mathcal{X}$ is a solution of the problem \eqref{equ1} and $\lambda^*\in\mathcal{Y}$ is a solution of its Lagrange dual. It is well known that the original Lagrangian $\mathcal{L}$ and the augmented Lagrangian $\mathcal{L}_\rho : \mathcal{X} \times \mathcal{Y} \to \mathbb{R}$ 
\begin{equation*}
	\mathcal{L}_{\rho}(x, \lambda): =\mathcal{L}(x, \lambda)+\frac{\rho}{2}\Vert Ax-b\Vert^2.
\end{equation*}
share the same set of saddle points.

In this work, we propose two novel dynamical systems for solving the problem \eqref{equ1}, based on the augmented Lagrangian function $\mathcal{L}_{\rho}$, implicit Hessian damping, and Tikhonov regularization. Moreover, we analyze the convergence properties of each system separately. Throughout the paper, we assume that $\Omega \neq \emptyset$.

\subsection{Related works}
The connection between continuous-time dynamical systems and discrete algorithms for convex optimization has attracted considerable research attention. A landmark contribution in this direction is the work of Su et al. \cite{bib1}, who proposed the following inertial dynamical system with vanishing damping term $\frac{\alpha}{t}$:
\begin{equation*}
	(\mathrm{AVD}_{\alpha}) \quad \ddot{x}(t) + \frac{\alpha}{t} \dot{x}(t) + \nabla f(x(t)) = 0,
\end{equation*}
for solving the unconstrained optimization problem $\min_x f(x)$. When $\alpha = 3$, the dynamical system  $(\mathrm{AVD}_{\alpha})$ can be viewed as a continuous-time limit of Nesterov’s accelerated gradient method \cite{bib2} and the fast iterative shrinkage-thresholding algorithm \cite{bib3}. Su et al. \cite{bib1} established a convergence rate of $f(x(t)) - \min_x f = \mathcal{O}(\frac{1}{t^2})$ when $\alpha \ge 3$. May \cite{bib5} and Attouch et al. \cite{bib4} further analyzed the convergence behavior of the dynamical system $(\mathrm{AVD}_{\alpha})$ in the cases $\alpha > 3$ and $\alpha \leq 3$, respectively. Moreover, Attouch et al. \cite{bib10} extended the study to a generalized version of the dynamical system $(\mathrm{AVD}_{\alpha})$ with a time-dependent scaling factor $\gamma(t)$:
\begin{equation*}
	(\mathrm{AVD_{\alpha,\gamma}})\quad\ddot{x}(t)+\frac{\alpha}{t}\dot{x}(t)+\gamma(t)\nabla f(x(t))=0,
\end{equation*}
and proved the convergence rate $\mathcal{O}(\frac{1}{t^2\gamma(t)})$ of the objective residual. This indicates that the scaling coefficient $\gamma(t)$ may play a crucial role in accelerating the convergence rate.  

It is well known that for the linear equality constrained convex optimization problem \eqref{equ1}, one can typically introduce the Lagrangian function to solve the problem. Based on this, Zeng et al. \cite{bib13} proposed the following inertial primal-dual dynamical system (IPDD) with damping $\frac{\alpha}{t}$:
\begin{equation*}
	\begin{cases}
		\ddot{x}(t)+\frac{\alpha}{t}\dot{x}(t)=-\nabla f(x(t))-A^T(\lambda(t)+\beta t\dot{\lambda}(t))-A^T(Ax(t)-b),\\
		\ddot{\lambda}(t)+\frac{\alpha}{t}\dot{\lambda}(t)=A(x(t)+\beta t\dot{x}(t))-b,
	\end{cases}
\end{equation*}
and proved that under suitable parameter choices, the objective residual and constraint violation decay at rates $\mathcal{O}(\frac{1}{t^2})$ and $\mathcal{O}(\frac{1}{t})$, respectively. Based on the aforementioned dynamical system,  Bo\c{t} et al. \cite{bib11}  and He et al. \cite{bib9} explored its discrete algorithms, and He et al. \cite{bib12} further considered a perturbed vision. Further extensions were proposed by Attouch et al. \cite{bib15} and He et al. \cite{bib14}, who developed new primal-dual dynamical systems with general damping parameters for separable convex optimization with linear constraints. However, recognizing that second-order dynamical systems are more complex than first-order dynamical systems both from a numerical computation perspective and in terms of time discretization, He et al.\cite{HeTAC2022,bib17} proposed a class of ``second-order + first-order'' dynamical systems and analyzed the fast convergence properties. 

\subsubsection{The role of the Hessian Damping}
However, inertial dynamical systems with only viscous damping may exhibit severe oscillations in practice. To address this issue, Hessian-driven damping has been introduced and extensively studied in the literature. These dynamical systems incorporate an additional damping term of the form $\nabla^2 f(x(t)) \dot{x}(t)$, which improves the control of oscillations by accounting for the local curvature of the objective function.  By incorporating Hessian-driven damping into $(\mathrm{AVD_{\alpha}})$, Attouch et al. \cite{bib18} proposed the following inertial dynamical system with Hessian-driven damping:
\begin{equation*}
	(\mathrm{DIN\text{-}AVD})\quad \ddot{x}(t)+\frac{\alpha}{t}\dot{x}(t)+\beta \nabla^2f(x(t))\dot{x}(t)+\nabla f(x(t))=0.
\end{equation*}
They showed that when $\alpha \geq 3$ and $\beta >0$, the objective residual decays at a rate of $\mathcal{O}(\frac{1}{t^2})$.  Furthermore, Attouch et al. \cite{bib51} studied an inertial dynamical system derived from $(\mathrm{DIN\text{-}AVD})$ by incorporating a general scaling coefficient $b(t)$ and established a convergence rate of $f(x(t))- \min f =\mathcal{O}(\frac{1}{t^2b(t)})$. It is worth noting that the dynamical system $(\mathrm{DIN\text{-}AVD})$ requires the objective function to be twice continuously differentiable. Moreover, it has been observed that $(\mathrm{DIN\text{-}AVD})$ does not naturally lead to Nesterov-type inertial algorithms when discretized using standard implicit or explicit schemes. To address this limitation, Alecsa et al. \cite{bib19} proposed an inertial dynamical system with implicit Hessian damping, formulated as follows:
\begin{equation}\label{equ101}
	\ddot{x}(t)+\frac{\alpha}{t}\dot{x}(t)+\nabla f\left(x(t)+\left(\gamma+\frac{\beta}{t}\right)\dot{x}(t)\right)=0.
\end{equation}
They demonstrated that this dynamical system effectively reduces oscillations even when the objective function is not twice continuously differentiable. Specifically, they showed that inertial optimization algorithms, such as Nesterov’s accelerated gradient method, can be naturally derived from their dynamical system through explicit discretization. 

For the linear equality constrained optimization problem \eqref{equ1}, He et al. \cite{bib20} initially proposed a mixed primal-dual dynamical system with explicit Hessian-driven damping. They demonstrated that, under general Hessian-driven damping and scaling coefficients, the proposed dynamical system guarantees fast convergence of the primal-dual gap, the objective residual, and the feasibility violation. For additional research on dynamical systems with Hessian-driven damping, we refer the reader to \cite{bib29,bib30,bib50}. However, it remains unclear whether one can design a second-order primal-dual dynamical system with implicit Hessian-driven damping that avoids explicit computation of the Hessian matrix, while still effectively reducing oscillations in the trajectory.

\subsubsection{The role of the Tikhonov regularization}

As indicated by existing literature, achieving strong convergence of the trajectory generated by a dynamical system is generally difficult without imposing additional assumptions on the objective function, such as strong convexity. To address this challenge and ensure strong convergence of the trajectory even when the objective function is merely convex, researchers have employed Tikhonov regularization techniques. By incorporating a Tikhonov regularization term into the dynamical system, they have been able to establish strong convergence of the trajectory. Attouch et al. \cite{bib22} were the first to propose a Tikhonov regularized dynamical system associated with the heavy-ball system with friction. They proved that if the Tikhonov regularization parameter $\epsilon(t)$ satisfies the condition $\int_{0}^{+\infty} \epsilon(t)\,dt = +\infty$, then the trajectory generated by the dynamical system converges strongly to the minimum-norm solution of the unconstrained optimization problem $\min_{x} f(x)$. To simultaneously achieve rapid convergence in objective function value and strong convergence of the trajectory, Attouch et al. \cite{bib21} proposed a Tikhonov regularized version of the dynamical system $(\mathrm{AVD_{\alpha}})$, given by:
\begin{equation*}
	(\mathrm{AVD_{\alpha,\epsilon}})\quad \ddot{x}(t) + \frac{\alpha}{t} \dot{x}(t) + \nabla f(x(t)) + \epsilon(t) x(t) = 0.
\end{equation*}
They showed that this dynamical system not only retains the fast convergence rate of $(\mathrm{AVD_{\alpha}})$ but also guarantees strong convergence of the trajectory.  Subsequently, Bo\c{t} et al. \cite{bib40} proposed the following dynamical system:
\begin{equation*}
	\ddot{x}(t)+\frac{\alpha}{t}\dot{x}(t)+\beta \nabla^2f(x(t))\dot{x}(t)+\nabla f(x(t))+\epsilon(t) x(t)=0.
\end{equation*}
They prove that the dynamical system can simultaneously inherit the favorable properties of both the dynamical system $(\mathrm{DIN\text{-}AVD})$ and the dynamical system $(\mathrm{AVD_{\alpha,\epsilon}})$. In addition, Alecsa et al. \cite{bib37} studied a Tikhonov regularized inertial dynamical system with implicit Hessian damping. They proved that the objective function value along the generated trajectory converges to the global minimum at a rate of $o(\frac{1}{t^2})$ and that the trajectory converges strongly.

More recently, Zhu et al. \cite{bib23} proposed a Tikhonov regularized primal-dual dynamical system for the linear equality constrained optimization problem \eqref{equ1}:
\begin{equation*}
	\begin{cases}
		\ddot{x}(t)+\frac{\alpha}{t}\dot{x}(t)=-\beta(t)(\nabla f(x(t))+A^T(\lambda(t)+\theta t\dot{\lambda}(t))+\rho A^T(Ax(t)-b)+\epsilon(t)x(t)),\\
		\ddot{\lambda}(t)+\frac{\alpha}{t}\dot{\lambda}(t)=\beta(t)(A(x(t)+\theta t\dot{x}(t))-b),
	\end{cases}
\end{equation*}
They showed that this dynamical system achieves the convergence rate of $\mathcal{O}(\frac{1}{t^2\beta(t)})$ for both the primal–dual gap and the feasibility violation, when the Tikhonov regularization parameter satisfies $\int_{t_0}^{+\infty} t \beta(t) \epsilon(t) \,dt < +\infty$. Furthermore, under the condition $\int_{t_0}^{+\infty} \frac{\beta(t)\epsilon(t)}{t} \,dt < +\infty$, the trajectory converges strongly to the minimum norm solution of the problem \eqref{equ1}. For additional research on Tikhonov regularized dynamical systems, we refer the reader to \cite{bib18-1,bib25,bib36,bib38,bib40}.

\subsection{Main contributions}

In this paper, we propose two novel primal-dual dynamical systems with
implicit Hessian damping and Tikhonov regularization to solve the linear equality constrained optimization problem \eqref{equ1}. Our main contributions are summarized as follows.
\begin{itemize}
	\item [(a)] We first propose a primal-dual dynamical system with implicit Hessian damping for solving the constrained optimization problem \eqref{equ1}, and establish its fast convergence properties. Our results can be regarded as an extension of the dynamical system \eqref{equ101} proposed in \cite{bib19} to constrained optimization problems. Moreover, in contrast to dynamical systems with explicit Hessian-driven damping \cite{bib18,bib51,bib20}, which have been developed for either unconstrained problems or the linearly constrained problem \eqref{equ1}, the proposed dynamical system effectively reduces oscillations and broadens applicability by not requiring the objective function to be twice differentiable.
	
	\item [(b)] To further ensure the strong convergence of the trajectory, we incorporate a Tikhonov regularization term $\epsilon(t)x(t)$ into the proposed primal-dual dynamical system with implicit Hessian damping, resulting in a new primal-dual dynamical system with implicit Hessian damping and Tikhonov regularization. We rigorously prove that the resulting dynamical system \eqref{equ2_1} not only inherits the desirable fast convergence and reduced oscillatory behavior but also guarantees the strong convergence of the trajectory. The dynamical system can be viewed as an extension of the dynamical system proposed in \cite{bib37} to constrained optimization problems.
	
\end{itemize}

\subsection{Organization}
In this paper, Section \ref{sec2} focuses on analyzing the asymptotic convergence properties of the primal-dual dynamical system with implicit Hessian damping. In Section \ref{sec3}, we analyze the convergence properties of the primal-dual dynamical system with implicit Hessian damping and Tikhonov regularization and establish the strong convergence of its trajectory. Section \ref{sec5} includes several numerical experiments designed to illustrate the theoretical results obtained in the previous sections. Finally, Section \ref{sec6} provides a conclusion that highlights the key contributions of this research.

\section{Primal-dual dynamical system with implicit Hessian damping}\label{sec2}

In this section, inspired by the use of implicit Hessian damping as in \cite{bib19} and scaling coefficients,  we propose the following ``second-order primal'' + ``second-order dual'' dynamical system for solving  \eqref{equ1}:
\begin{equation}\label{equ2}
	\begin{cases}
		\begin{aligned}
			&\ddot{x}(t) + \frac{\alpha}{t} \dot{x}(t) + \xi(t) \nabla_x\mathcal{L}_{\rho}(\hat{x}(t),\bar{\lambda}(t)) = 0, \\
			&\ddot{\lambda}(t) + \frac{\alpha}{t} \dot{\lambda}(t) - \xi(t) (\nabla_{\lambda}\mathcal{L}_{\rho}(\bar{x}(t),\lambda(t))+\eta(t)A\dot{x}(t)
			-\frac{3}{2\alpha} t\xi(t)\beta(t)A\nabla_x\mathcal{L}_{\rho}(\hat{x}(t), \bar{\lambda}(t)))= 0,
		\end{aligned}
	\end{cases} 
\end{equation}
where $\alpha>0$, $t\ge t_0>0$, 
\[ \hat{x}(t)=x(t)+\beta(t)\dot{x}(t) \text{ with } \beta(t)=\gamma+\frac{\beta}{t}, \text{ and } \gamma>0,~ \beta\in\mathbb{R} \text{ or } \gamma=0,~ \beta\ge 0, \]
\begin{equation*}\label{eq_coff}
	(\bar{x}(t),\bar{\lambda}(t)) =(x(t),\lambda(t))+\frac{3}{2\alpha}t(\dot{x}(t), \dot{\lambda}(t)),
\end{equation*}
$\eta(t)=\frac{1}{2\alpha}(-\alpha\beta(t)+3 t\dot{\beta}(t))$ and $\xi:\left[t_0,+\infty\right)\rightarrow\left(0,+\infty\right)$ is a continuous function.

The motivation of the dynamical system \eqref{equ2} in relation to linearly constrained optimization problem \eqref{equ1} is motivated by the following facts.

\textbf{(a)} The term $\hat{x}(t) = x(t) + \beta(t)\dot{x}(t)$ can be interpreted as the primal trajectory $x(t)$ perturbed by $\beta(t)\dot{x}(t)$. Consider the expression $\nabla_x\mathcal{L}_{\rho}(\hat{x}(t), \bar{\lambda}(t)) = \nabla f(\hat{x}(t)) + A^T \bar{\lambda}(t) + \rho A^T(A\hat{x}(t) - b)$. By applying a Taylor expansion of $\nabla f$, we obtain
\[ \nabla f(x(t) + \beta(t)\dot{x}(t))\approx \nabla f(x(t))+\beta(t)\nabla^2f(x(t))\dot{x}(t).\]
We observe that the term $\nabla_x\mathcal{L}_{\rho}(\hat{x}(t), \bar{\lambda}(t))$ can be regarded as an implicit Hessian damping. This leads to an implicit variant of the primal-dual dynamical system with Hessian damping, similar to that in \cite{bib20}. We will demonstrate that even without explicitly computing the Hessian $\nabla^2 f$, our dynamical system can still suppress transversal oscillations, as effectively as the explicit Hessian damping dynamical system proposed in \cite{bib20}. When $\beta(t)=0$, it reduces to the inertial primal-dual dynamical system in \cite{bib16,bib13}. In the special case where $A = 0$ and $b = 0$, the primal part of the dynamical system \eqref{equ2} reduces to the second-order dynamical system studied in \cite{bib19}.

\textbf{(b)} The vanishing damping term $\frac{\alpha}{t}$ plays a critical role in achieving Nesterov-type acceleration; see, e.g., \cite{bib1,bib10,bib13,bib20,bib17} for related studies. The time-scaling function $\xi(t)$ contributes to achieving further acceleration, as demonstrated in several studies \cite{bib15,bib25,bib17}. The extrapolation terms $\frac{3}{2\alpha}t(\dot{x}(t), \dot{\lambda}(t))$ and $\eta(t)A\dot{x}(t)$ are commonly used in inertial primal-dual dynamical systems to solve the problem \eqref{equ1}, and are essential for achieving fast convergence of the dynamical system; see, e.g., \cite{bib13,bib11,bib15,bib23}.

The existence and uniqueness of a global solution to the dynamical system \eqref{equ2} can be readily established using the Cauchy-Lipschitz-Picard theorem \cite[Proposition 6.2.1]{bib24}. For completeness, a detailed proof is provided in Appendix \ref{Appendix A} with the Tikhonov regularization parameter $\epsilon$ set to zero in the present case. We then proceed to establish several convergence results for the dynamical system \eqref{equ2}. To start with,  let $(x(t), \lambda(t))$ be a solution of the dynamical system $\rm{\eqref{equ2}}$ and $(x^*, \mu)\in\mathbb{F}\times\mathcal{Y}$. We first define an energy function $E_{\mu}: [t_0, +\infty)\rightarrow \mathbb{R}$ as follows:
\begin{equation}\label{equ3}
	\begin{aligned}
		E_{\mu}(t)=&t^2\xi(t)(\mathcal{L}_{\rho}(x(t)+\beta(t)\dot{x}(t), \mu)-\mathcal{L}_{\rho}(x^*, \mu))\\
		&+\frac{1}{2}\Vert \frac{2\alpha}{3}(x(t)-x^*)+t\dot{x}(t)\Vert^2+\frac{\alpha(\alpha-3)}{9}\Vert x(t)-x^*\Vert^2\\
		&+\frac{1}{2}\Vert \frac{2\alpha}{3}(\lambda(t)-\mu)+t\dot{\lambda}(t)\Vert^2+\frac{\alpha(\alpha-3)}{9}\Vert \lambda(t)-\mu\Vert^2.
	\end{aligned}
\end{equation}
In what follows, we provide an estimate for its derivative. This estimate will be crucial in analyzing the asymptotic convergence properties of the dynamical system \eqref{equ2}.

\begin{lemma}\label{lem2.1}
	Let $t_0>0$, $(x(t), \lambda(t))$ be a solution of the dynamical system $\rm{\eqref{equ2}}$ and $(x^*, \mu)\in\mathbb{F}\times\mathcal{Y}$. Denote the energy function $E_{\mu}$ as in \eqref{equ3}. Assume that $\alpha>3$ and $\xi$ is a differentiable function that satisfies the condition: there exist $t_1\ge t_0$ and a sufficiently small constant $\delta>0$ such that 
	\begin{equation}\label{equ15}
		\left(2-\frac{2\alpha}{3}\right)+t\frac{\dot{\xi}(t)}{\xi(t)}< -\delta,\;(\forall t\ge t_1).
	\end{equation} Then, there exist $t_2\ge t_0$ and $p_2>0$  such that for all $t\ge t_2$, 
	\begin{equation}\label{equ17}
		\begin{aligned}
			\dot{E}_{\mu}(t)&+\delta t\xi(t)(\mathcal{L}_{\rho}(x(t)+\beta(t)\dot{x}(t), \mu)-\mathcal{L}_{\rho}(x^*, \mu))
			+\frac{\rho\alpha}{3}t\xi(t)\Vert A(x(t)+\beta(t)\dot{x}(t))-b\Vert^2\\
			&
			+p_2t\Vert \dot{x}(t)\Vert^2+ \frac{\alpha-3}{3}t\Vert \dot{\lambda}(t)\Vert^2\leq 0.
		\end{aligned}
	\end{equation}
\end{lemma}
\begin{proof}
	Differentiating \eqref{equ3} with respect to $t$ yields
	\begin{equation}\label{equ4}
		\begin{aligned}
			\dot{E}_{\mu}(t)=&(2t\xi(t)+t^2\dot{\xi}(t))(\mathcal{L}_{\rho}(x(t)+\beta(t)\dot{x}(t), \mu)-\mathcal{L}_{\rho}(x^*,  \mu))\\
			&+t^2\xi(t)\langle \nabla_x\mathcal{L}_{\rho}(x(t)+\beta(t)\dot{x}(t), \mu), (1+\dot{\beta}(t))\dot{x}(t)+\beta(t)\ddot{x}(t)\rangle\\
			&+\frac{4\alpha^2}{9}\langle x(t)-x^*+\frac{3}{2\alpha} t\dot{x}(t),(1+\frac{3}{2\alpha} )\dot{x}(t)+\frac{3}{2\alpha}  t\ddot{x}(t)\rangle +\frac{2\alpha(\alpha-3)}{9}\langle x(t)-x^*,\dot{x}(t)\rangle\\
			&+\frac{4\alpha^2}{9}\langle \lambda(t)-\mu+\frac{3}{2\alpha}  t\dot{\lambda}(t), (1+\frac{3}{2\alpha} )\dot{\lambda}(t)+\frac{3}{2\alpha}  t\ddot{\lambda}(t)\rangle +\frac{2\alpha(\alpha-3)}{9}\langle \lambda(t)-\mu,\dot{\lambda}(t)\rangle.
		\end{aligned}
	\end{equation}
	Meanwhile, from the definition of the dynamical system \eqref{equ2}, it follows that
	\begin{equation*}
		\begin{aligned}
			&t^2\xi(t)\langle \nabla_x\mathcal{L}_{\rho}(x(t)+\beta(t)\dot{x}(t),\mu), (1+\dot{\beta}(t))\dot{x}(t)+\beta(t)\ddot{x}(t)\rangle\\
			=&t^2\xi(t)(1+\dot{\beta}(t)-\frac{\alpha}{t}\beta(t))\langle \nabla_x\mathcal{L}_{\rho}(x(t)+\beta(t)\dot{x}(t),\lambda(t)+\frac{3}{2\alpha} t\dot{\lambda}(t)),\dot{x}(t)\rangle\\
			&+t^2\xi(t)(1+\dot{\beta}(t)-\frac{\alpha}{t}\beta(t))\langle \mu-(\lambda(t)+\frac{3}{2\alpha} t\dot{\lambda}(t)), A\dot{x}(t)\rangle\\
			&-t^2\xi(t)^2\beta(t)\Vert \nabla_x\mathcal{L}_{\rho}(x(t)+\beta(t)\dot{x}(t), \lambda(t)+\frac{3}{2\alpha} t\dot{\lambda}(t))\Vert ^2\\
			&-t^2\xi(t)^2\beta(t)\langle A^T(\mu-(\lambda(t)+\frac{3}{2\alpha} t\dot{\lambda}(t))), \nabla_x\mathcal{L}_{\rho}(x(t)+\beta(t)\dot{x}(t),\lambda(t)+\frac{3}{2\alpha} t\dot{\lambda}(t))\rangle,
		\end{aligned}
	\end{equation*}
	and
	\begin{equation*}
		\begin{aligned}
			&\frac{4\alpha^2}{9}\langle x(t)-x^* +\frac{3}{2\alpha} t\dot{x}(t),(1+\frac{3}{2\alpha} )\dot{x}(t)+\frac{3}{2\alpha}  t\ddot{x}(t)\rangle \\
			=&\frac{2\alpha(3-\alpha)}{9}\langle x(t)-x^*, \dot{x}(t)\rangle+ \frac{3-\alpha}{3}t\Vert \dot{x}(t)\Vert^2-\frac{2\alpha}{3}t\xi(t)\langle A^T(\lambda(t)+\frac{3}{2\alpha} t\dot{\lambda}(t)-\mu), x(t)-x^*\rangle\\
			&-\frac{2\alpha}{3}t\xi(t)\langle \nabla_x\mathcal{L}_{\rho}(x(t)+\beta(t)\dot{x}(t), \mu), x(t)-x^*\rangle 
			-t^2\xi(t)\langle \nabla_x\mathcal{L}_{\rho}(x(t)+\beta(t)\dot{x}(t), \lambda(t)+\frac{3}{2\alpha} t\dot{\lambda}(t)), \dot{x}(t) \rangle.
		\end{aligned}
	\end{equation*}
	Furthermore, together with the definition of augmented Lagrange function, we have
	\begin{equation*}
		\begin{aligned}
			& \frac{4\alpha^2}{9}\langle \lambda(t)-\mu+\frac{3}{2\alpha}  t\dot{\lambda}(t), (1+\frac{3}{2\alpha} )\dot{\lambda}(t)+\frac{3}{2\alpha}  t\ddot{\lambda}(t)\rangle \\
			=& \frac{2\alpha(3-\alpha)}{9}\langle \lambda(t)-\mu, \dot{\lambda}(t)\rangle+ \frac{3-\alpha}{3}t\Vert \dot{\lambda}(t)\Vert^2
			+\frac{2\alpha}{3}t\xi(t)\langle A(x(t)+\frac{3}{2\alpha} t\dot{x}(t))-b, \lambda(t)-\mu\rangle\\
			&+t^2\xi(t)\langle A(x(t)+\frac{3}{2\alpha} t\dot{x}(t))-b, \dot{\lambda}(t) \rangle 
			+ \frac{-\alpha\beta(t)+3 t\dot{\beta}(t)}{3}t\xi(t)\langle \lambda(t)-\mu+\frac{3}{2\alpha} t\dot{\lambda}(t), A\dot{x}(t)\rangle\\
			&- t^2\xi(t)^2\beta(t)\langle \lambda(t)-\mu+\frac{3}{2\alpha} t\dot{\lambda}(t), A\nabla_x\mathcal{L}_{\rho}(x(t)+\beta(t)\dot{x}(t), \lambda(t)+\frac{3}{2\alpha} t\dot{\lambda}(t))\rangle.
		\end{aligned}
	\end{equation*} 
	Thus, by substituting the above three relations into \eqref{equ4} and using $Ax^*=b$, we obtain
	\begin{equation}\label{equ11}
		\begin{aligned}
			\dot{E}_{\mu}(t)= &(2t\xi(t)+t^2\dot{\xi}(t))(\mathcal{L}_{\rho}(x(t)+\beta(t)\dot{x}(t),\mu)-\mathcal{L}_{\rho}(x^*,\mu))\\
			&+t^2\xi(t)(\dot{\beta}(t)-\frac{\alpha}{t}\beta(t))\langle \nabla_x\mathcal{L}_{\rho}(x(t)+\beta(t)\dot{x}(t), \lambda(t)+\frac{3}{2\alpha} t\dot{\lambda}(t)),\dot{x}(t)\rangle \\
			&-t^2\xi(t)^2\beta(t)\Vert \nabla_x\mathcal{L}_{\rho}(x(t)+\beta(t)\dot{x}(t), \lambda(t)+\frac{3}{2\alpha} t\dot{\lambda}(t))\Vert ^2\\
			&+ \frac{3-\alpha}{3}t(\Vert \dot{x}(t)\Vert^2+\Vert \dot{\lambda}(t)\Vert^2)
			-\frac{2\alpha}{3}t\xi(t)\langle \nabla_x\mathcal{L}_{\rho}(x(t)+\beta(t)\dot{x}(t), \mu), x(t)-x^*\rangle\\
			&+\frac{2\alpha}{3}t\xi(t)\beta(t)\langle \lambda(t)+\frac{3}{2\alpha} t\dot{\lambda}(t)-\mu, A\dot{x}(t)\rangle .
		\end{aligned}
	\end{equation}
	It follows from the convexity of $f$ that
	\begin{equation}\label{equ-lem2.1-1}
		\begin{aligned}
			&-\langle \nabla_x\mathcal{L}_{\rho}(x(t)+\beta(t)\dot{x}(t),\mu), x(t)-x^*\rangle\\
			\leq & \mathcal{L}_{\rho}(x^*,  \mu)-\mathcal{L}_{\rho}(x(t)+\beta(t)\dot{x}(t), \mu)-\frac{\rho}{2}\Vert A(x(t)+\beta(t)\dot{x}(t))-b\Vert^2\\
			&
			+\beta(t)\langle \nabla_x\mathcal{L}_{\rho}(x(t)+\beta(t)\dot{x}(t),\lambda(t)+\frac{3}{2\alpha} t\dot{\lambda}(t)),\dot{x}(t)\rangle
			+\beta(t)\langle \mu-(\lambda(t)+\frac{3}{2\alpha} t\dot{\lambda}(t)),A\dot{x}(t)\rangle.
		\end{aligned}
	\end{equation}
	Furthermore, by using Young's inequality, we estimate that
	\begin{equation}\label{equ-lem2.1-2}
		\begin{aligned}
			&t\xi(t)(t\dot{\beta}(t)-\frac{\alpha}{3}\beta(t))\langle \nabla_x\mathcal{L}_{\rho}(x(t)+\beta(t)\dot{x}(t), \lambda(t)+\frac{3}{2\alpha} t\dot{\lambda}(t)),\dot{x}(t)\rangle\\
			\leq & \frac{1}{2}\vert\dot{\beta}(t)-\frac{\alpha\beta(t)}{3t}\vert(t^{\frac{5}{2}}\xi(t)^2\Vert \nabla_x\mathcal{L}_{\rho}(x(t)+\beta(t)\dot{x}(t), \lambda(t)+\frac{3}{2\alpha}t\dot{\lambda}(t))\Vert^2+t^{\frac{3}{2}}\Vert \dot{x}(t)\Vert^2).
		\end{aligned}
	\end{equation}
	What's more, taking into account that $\beta(t)=\gamma+\frac{\beta}{t}$ with $\gamma>0, \beta\in\mathbb{R}$ or $\gamma=0, \beta\ge 0$, we obtain the following estimates
	\begin{equation}\label{equ44}
		\begin{cases}
			\text{if } \gamma>0 \text{ and }\beta\in\mathbb{R}, \text{ then exits } t'_2\ge t_1\text{ and } p_1>0 \text{ such that } \\
			\qquad\qquad\qquad-t^2(\beta(t)- \frac{\sqrt{t}}{2}\vert\dot{\beta}(t)-\frac{\alpha\beta(t)}{3t}\vert) \leq -p_1 t^2\text{ for all } t\ge t'_2,\\
			\text{if } \gamma=0 \text{ and }\beta\ge 0, \text{ then exits } t'_2\ge t_1\text{ and } p_1>0 \text{ such that } \\
			\qquad\qquad\qquad-t^2(\beta(t)- \frac{\sqrt{t}}{2}\vert\dot{\beta}(t)-\frac{\alpha\beta(t)}{3t}\vert) \leq -p_1 \beta t\text{ for all } t\ge t'_2.
		\end{cases}	
	\end{equation}
	Furthermore, since $\alpha>3$, there exist $t''_2\ge t_1$ and $p_2>0$ such that 
	\begin{equation}\label{equ101_1}
		\frac{3-\alpha}{3}t+\frac{t^{\frac{3}{2}}}{2}\left\vert\dot{\beta}(t)-\frac{\alpha\beta(t)}{3t}\right\vert\leq -p_2 t \text{ for all } t\ge t''_2.
	\end{equation}
	Therefore, by combining \eqref{equ11}-\eqref{equ101_1} with \eqref{equ15}, we get the the estimate \eqref{equ17} with $t_2=\max\left\{t_1, t'_2, t''_2\right\}$. This completes the proof of Lemma \ref{lem2.1}.
\end{proof}

\begin{lemma}\label{lem2.2}
	Let $t_0>0$, $(x(t), \lambda(t))$ be a solution of the dynamical system $\rm{\eqref{equ2}}$ and $(x^*, \mu)\in\mathbb{F}\times\mathcal{Y}$. Suppose that the assumptions of Lemma \ref{lem2.1} hold. Define $\mu(\cdot):[t_2,+\infty)\to \mathcal{Y}$ by
	\begin{equation}\label{equ102_1}
		\mu(s):=
		\begin{cases}
			\lambda^*+\frac{A(x(s)+\beta(s)\dot{x}(s))-b}{\Vert A(x(s)+\beta(s)\dot{x}(s))-b \Vert},&\text{if } A(x(s)+\beta(s)\dot{x}(s))-b\neq 0,\\
			\lambda^*, &\text{otherwise}.
		\end{cases}
	\end{equation}
	Then, $E_{\mu(s)}$ is bounded on $\left[t_2, +\infty\right)$.
\end{lemma}
\begin{proof}
	By the definition of $\mathcal{L}_{\rho}$, we have
	\begin{align}
		&\mathcal{L}_{\rho}(x(t)+\beta(t)\dot{x}(t), \mu)-\mathcal{L}_{\rho}(x^*, \mu)\nonumber\\
		=& f(x(t)+\beta(t)\dot{x}(t))+\langle \lambda^*, A(x(t)+\beta(t)\dot{x}(t))-b\rangle \nonumber 
		+\frac{\rho}{2}\Vert A(x(t)+\beta(t)\dot{x}(t))-b\Vert^2 \nonumber\\ 
		&+\langle \mu -\lambda^*, A(x(t)+\beta(t)\dot{x}(t))-b\rangle \nonumber\\
		=& \mathcal{L}_{\rho}(x(t)+\beta(t)\dot{x}(t), \lambda^*)-\mathcal{L}_{\rho}(x^*, \lambda^*)\nonumber
		+\langle \mu -\lambda^*, A(x(t)+\beta(t)\dot{x}(t))-b\rangle \nonumber\\
		\ge &\, \langle \mu -\lambda^*, A(x(t)+\beta(t)\dot{x}(t))-b\rangle \label{equ33}.
	\end{align}
	Furthermore, from Lemma \ref{lem2.1}, it follows that $E_{\lambda^*}(t)$ is bounded. Then based on the definition and the boundedness of $E_{\lambda^*}$, we derive that there exists a constant $\widetilde{C}$ such that
	\begin{align}
		&\frac{1}{2}\Vert \frac{2\alpha}{3}((x(t), \lambda(t))- (x^*, \mu))+t(\dot{x}(t), \dot{\lambda}(t))\Vert^2+\frac{\alpha(\alpha-3)}{9}\Vert ((x(t), \lambda(t))- (x^*, \mu))\Vert^2 \nonumber\\
		\leq & \Vert \frac{2\alpha}{3}((x(t), \lambda(t))- (x^*, \lambda^*))+t(\dot{x}(t), \dot{\lambda}(t))\Vert^2\nonumber\\
		&+\frac{2\alpha(\alpha-3)}{9}\Vert ((x(t), \lambda(t))- (x^*, \lambda^*))\Vert^2
		+(\frac{2\alpha}{3}+\frac{2\alpha(\alpha-3)}{9})\Vert \mu -\lambda^*\Vert^2\label{equ35}\\
		\leq & \frac{1}{2}\Vert \frac{2\alpha}{3}((x(t),  \lambda(t))- (x^*, \lambda^*))+t(\dot{x}(t), \dot{\lambda}(t))\Vert^2\nonumber\\
		&+\frac{\alpha(\alpha-3)}{9}\Vert ((x(t), \lambda(t))- (x^*,  \lambda^*))\Vert^2
		+\widetilde{C}+\frac{2\alpha}{3}+\frac{2\alpha(\alpha-3)}{9} \nonumber .
	\end{align}
	Hence, by combining \eqref{equ33}-\eqref{equ35} with the boundedness of $E_{\lambda^*}(t)$, we obtain that there exists a constant $C$ such that for all $\mu\in B(\lambda^*; 1)$:
	\begin{align}
		E_{\mu}(t)
		\leq C+t^2\xi(t)\langle \mu -\lambda^*, A(x(t)+\beta(t)\dot{x}(t))-b\rangle. \label{equ34}
	\end{align}
	According to the definition of $\mu(s)$, it is evident that $\mu(s)\in B(\lambda^*; 1)$. Then, \eqref{equ33} combined with \eqref{equ17}  ensures that for all $t\ge t_2$, we have
	\begin{equation*}
		\begin{aligned}
			\dot{E}_{\mu(s)}(t)\leq & -\delta t\xi(t)(\mathcal{L}_{\rho}(x(t)+\beta(t)\dot{x}(t), \mu(s))-\mathcal{L}_{\rho}(x^*, \mu(s)))\\
			\leq & -\delta t\xi(t)\langle \mu(s) -\lambda^*, A(x(t)+\beta(t)\dot{x}(t))-b\rangle.
		\end{aligned}
	\end{equation*}
	Multiplying both sides of the above inequality by $t^\delta$ and using integration by parts yield the following result:
	\begin{equation*}
		\begin{aligned}
			t^{\delta}E_{\mu(s)}(t)- t_2^{\delta}E_{\mu(s)}(t_2)-&\delta\int_{t_2}^{t}\tau^{\delta-1}E_{\mu(s)}(\tau)d\tau
			\leq -\delta\int_{t_2}^{t}\tau^{\delta+1}\xi(\tau)\langle \mu (s)-\lambda^*, A(x(\tau)+\beta(\tau)\dot{x}(\tau))-b\rangle d\tau,
		\end{aligned}
	\end{equation*}
	which combing with \eqref{equ34} leads to
	\begin{equation}
		\begin{aligned}
			t^{\delta}E_{\mu(s)}(t)\leq &  t_2^{\delta}E_{\mu(s)}(t_2)+\delta\int_{t_2}^{t}\tau^{\delta-1}E_{\mu(s)}(\tau)d\tau
			-\delta\int_{t_2}^{t}\tau^{\delta+1}\xi(\tau)\langle \mu (s)-\lambda^*, A(x(\tau)+\beta(\tau)\dot{x}(\tau))-b\rangle d\tau\\
			\leq & t_2^{\delta}E_{\mu(s)}(t_2)+\delta C\int_{t_2}^{t}\tau^{\delta-1}d\tau
			\leq  t^{\delta}(E_{\mu(s)}(t_2)+C).
		\end{aligned}
	\end{equation}
	Thus,
	\begin{equation}\label{equ36}
		\begin{aligned}
			E_{\mu(s)}(t)\leq E_{\mu(s)}(t_2)+C.
		\end{aligned}
	\end{equation}
	
	Next, we will prove that $ \sup_{\mu\in B(\lambda^*;1)} E_{\mu}(t_2)<+\infty $. First, by the definition of augmented Lagrange function and the Cauchy-Schwarz inequality, we obtain  for any $\mu\in B(\lambda^*;1)$,
	\begin{equation*}
		\begin{aligned}
			&\mathcal{L}_{\rho}(x(t_2)+\beta(t_2)\dot{x}(t_2),\mu)-\mathcal{L}_{\rho}(x^*,  \mu)\\
			= & f(x(t_2)+\beta(t_2)\dot{x}(t_2))-f(x^*)+\langle \mu, A(x(t_2)+\beta(t_2)\dot{x}(t_2))-b\rangle
			+\frac{\rho}{2}\Vert A(x(t_2)+\beta(t_2)\dot{x}(t_2))-b \Vert^2\\
			\leq &f(x(t_2)+\beta(t_2)\dot{x}(t_2))-f(x^*)+\Vert \mu\Vert \Vert A(x(t_2)+\beta(t_2)\dot{x}(t_2))-b\Vert 
			+\frac{\rho}{2}\Vert A(x(t_2)+\beta(t_2)\dot{x}(t_2))-b \Vert^2\\
			\leq & C_1:=(\vert f(x(t_2)+\beta(t_2)\dot{x}(t_2))-f(x^*) \vert +(1+\Vert \lambda^*\Vert)\Vert A(x(t_2)+\beta(t_2)\dot{x}(t_2))-b\Vert 
			+\frac{\rho}{2}\Vert A(x(t_2)+\beta(t_2)\dot{x}(t_2))-b \Vert^2).
		\end{aligned}
	\end{equation*}
	And according to \eqref{equ35}, we also get that for any $\mu\in B(\lambda^*;1)$,
	\begin{equation*}
		\begin{aligned}
			&\frac{1}{2}\Vert \frac{2\alpha}{3}((x(t_2), \lambda(t_2))- (x^*,  \mu))+t_2(\dot{x}(t_2), \dot{\lambda}(t_2))\Vert^2+\frac{\alpha(\alpha-3)}{9}\Vert ((x(t_2),  \lambda(t_2))- (x^*, \mu))\Vert^2\\
			\leq & \Vert \frac{2\alpha}{3}((x(t_2), \lambda(t_2))- (x^*,  \lambda^*))+t_2(\dot{x}(t_2), \dot{\lambda}(t_2))\Vert^2\nonumber\\
			&+\frac{2\alpha(\alpha-3)}{9}\Vert ((x(t_2),  \lambda(t_2))- (x^*,  \lambda^*))\Vert^2
			+(\frac{2\alpha}{3}+\frac{2\alpha(\alpha-3)}{9})\Vert \mu -\lambda^*\Vert^2\\
			\leq & C_2:=(\Vert \frac{2\alpha}{3}((x(t_2), \lambda(t_2))- (x^*, \lambda^*))+t_2(\dot{x}(t_2),  \dot{\lambda}(t_2))\Vert^2\nonumber\\
			&+\frac{2\alpha(\alpha-3)}{9}\Vert ((x(t_2),  \lambda(t_2))- (x^*,  \lambda^*))\Vert^2
			+\frac{2\alpha}{3}+\frac{2\alpha(\alpha-3)}{9}).
		\end{aligned}
	\end{equation*}
	It follows from the above two relations that
	\begin{equation*}
		\sup_{\mu\in B(\lambda^*;1)} E_{\mu}(t_2)\leq t_2^2\xi(t_2)C_1+C_2<+\infty,
	\end{equation*}
	which together with  \eqref{equ36} yields the conclusion. This completes the proof of Lemma \ref{lem2.2}.
\end{proof}

Building on the Lemma \ref{lem2.1} and the Lemma \ref{lem2.2}, we now derive the fast convergence properties of the dynamical system \eqref{equ2} under suitable conditions.

\begin{theorem}\label{thm2.1}
	Suppose that the assumptions of Lemma \ref{lem2.1} hold. Then, for any trajectory $(x(t), \lambda(t))$ of the dynamical system \eqref{equ2} and any primal-dual optimal solution $(x^*, \lambda^*) \in\Omega$ of the problem \eqref{equ1}, $\quad (x(\cdot), \lambda(\cdot))\text{ is bounded}$ and the following conclusions hold:
	
	\vspace{2mm}
	\noindent\textbf{(Integral estimates)}
	\begin{equation*}
		\begin{cases}
			\int_{t_0}^{+\infty} t\xi(t)(\mathcal{L}_{\rho}(x(t)+(\gamma+\frac{\beta}{t})\dot{x}(t), \lambda^*)-\mathcal{L}_{\rho}(x^*, \lambda^*))dt<+\infty, \vspace{2mm}\\
			\int_{t_0}^{+\infty}t(\Vert \dot{x}(t)\Vert^2+\Vert \dot{\lambda}(t)\Vert^2) dt<+\infty, \vspace{2mm}\\
			\int_{t_0}^{+\infty}t\xi(t)\Vert A(x(t)+(\gamma+\frac{\beta}{t})\dot{x}(t))-b\Vert^2 dt<+\infty;
		\end{cases}
	\end{equation*}
	
	\noindent\textbf{(Pointwise estimates)}  
	\begin{equation*}
		\begin{cases}
			\mathcal{L}_{\rho}(x(t)+(\gamma+\frac{\beta}{t})\dot{x}(t), \lambda^*)-\mathcal{L}_{\rho}(x^*, \lambda^*)=\mathcal{O}(\frac{1}{t^2\xi(t)}),\\
			\Vert (\dot{x}(t), \dot{\lambda}(t))\Vert=\mathcal{O}(\frac{1}{t}),\\
			\Vert A(x(t)+(\gamma+\frac{\beta}{t})\dot{x}(t))-b\Vert=\mathcal{O}(\frac{1}{t^2\xi(t)}),\\
			\vert f(x(t)+(\gamma+\frac{\beta}{t})\dot{x}(t))-f(x^*)\vert =\mathcal{O}(\frac{1}{t^2\xi(t)}).
		\end{cases}
	\end{equation*}
\end{theorem}
\begin{proof}
	Integrating \eqref{equ17} with $\mu=\lambda^*$ on $[t_2, t]$, for any $t\ge t_2$, we have
	\begin{equation}\label{equ18}
		\begin{aligned}
			E_{\lambda^*}(t)&+\delta\int_{t_2}^{t} s\xi(s)(\mathcal{L}_{\rho}(x(s)+\beta(s)\dot{x}(s), \lambda^*)-\mathcal{L}_{\rho}(x^*, \lambda^*))ds\\
			&+p_2\int_{t_2}^{t}s\Vert \dot{x}(s)\Vert^2 ds
			+\frac{\rho\alpha}{3}\int_{t_2}^{t}s\xi(s)\Vert A(x(s)+\beta(s)\dot{x}(s))-b\Vert^2 ds+ \frac{\alpha-3}{3}\int_{t_2}^{t}s\Vert \dot{\lambda}(s)\Vert^2 ds\\
			\leq & E_{\lambda^*}(t_2).
		\end{aligned}
	\end{equation}     
	Thus, we deduce that
	\begin{equation}\label{equ43}
		E_{\lambda^*}(t) \text{ is bounded},
	\end{equation}
	and all integral estimates are hold.
	Furthermore, according to the definition of \eqref{equ3}, we get
	\begin{eqnarray}
		&&\mathcal{L}_{\rho}(x(t)+\beta(t)\dot{x}(t), \lambda^*)-\mathcal{L}_{\rho}(x^*, \lambda^*)=\mathcal{O}(\frac{1}{t^2\xi(t)}),\nonumber\\
		&&(x(t), \lambda(t))\text{ is bounded on} \left[t_2, +\infty\right),\label{equ19}\\
		&&\sup_{t\ge t_0}\Vert \frac{2\alpha}{3}((x(t), \lambda(t))-(x^*, \lambda^*))+t(\dot{x}(t),  \dot{\lambda}(t))\Vert^2<+\infty.\label{equ20}
	\end{eqnarray}
	Combining \eqref{equ20} with \eqref{equ19} leads to
	\begin{equation}
		\Vert (\dot{x}(t), \dot{\lambda}(t))\Vert=\mathcal{O}(\frac{1}{t}).
	\end{equation}
	
	In the following,  we further analyze the convergence rates of the feasibility violation and the objective residual by incorporating the conclusion from Lemma \ref{lem2.2}.
	
	\eqref{equ36} combined with the conclusion in Lemma \ref{lem2.2} and the definition of the augmented Lagrange function ensures that for all $t\ge t_2$,
	\begin{equation*}
		\begin{aligned}
			&t^2\xi(t)(f(x(t)+\beta(t)\dot{x}(t))-f(x^*)
			+\langle \mu(s), A(x(t)+\beta(t)\dot{x}(t))-b\rangle)
			\leq C_3:= \sup_{\mu\in B(\lambda^*;1)} E_{\mu}(t_2)+C.
		\end{aligned}
	\end{equation*}
	Then, the above inequality is fulfilled also for $t:=s\ge t_2$. By the definition of $\mu(s)$, if $A(x(s)+\beta(s)\dot{x}(s))-b\neq 0$, we get
	\begin{equation}\label{equ40}
		\begin{aligned}
			&s^2\xi(s)(\mathcal{L}(x(s)+\beta(s)\dot{x}(s), \lambda^*)-\mathcal{L}(x^*,  \lambda^*)
			+\Vert A(x(s)+\beta(s)\dot{x}(s))-b\Vert)\leq C_3.
		\end{aligned}
	\end{equation}
	Since $\mathcal{L}(x(s)+\beta(s)\dot{x}(s), \lambda^*)-\mathcal{L}(x^*, \lambda^*)\ge 0$ and $s\ge t _2$ has been arbitrarily chosen, the above inequality is hold for all $t\ge t_2$. Then, we have for all $t\ge t_2$,
	\begin{equation*}
		\Vert A(x(t)+\beta(t)\dot{x}(t))-b\Vert=\mathcal{O}(\frac{1}{t^2\xi(t)}).
	\end{equation*}
	Since \eqref{equ40} is hold for all $s:=t\ge t_2$, it follows from the above estimate that
	\begin{equation*}
		\begin{aligned}
			f(x(t)+\beta(t)\dot{x}(t))-f(x^*)
			\leq &\frac{C_3}{t^2\xi(t)}-\langle \lambda^*, A(x(t)+\beta(t)\dot{x}(t))-b\rangle\\
			\leq& \frac{C_3}{t^2\xi(t)}+\Vert \lambda^*\Vert \Vert A(x(t)+\beta(t)\dot{x}(t))-b\Vert 
			\leq \frac{(1+\Vert \lambda^*\Vert)C_3}{t^2\xi(t)}.
		\end{aligned}
	\end{equation*}
	On the other hand, the convexity of $f$ and the fact that $(x^*, \lambda^*)\in \Omega$ imply that for all $t\ge t_2$,
	\begin{equation*}
		\begin{aligned}
			f(x(t)+\beta(t)\dot{x}(t))-f(x^*)
			\ge & \langle \nabla f(x^*), x(t)+\beta(t)\dot{x}(t)-x^*\rangle \\
			=&-\langle A^T\lambda^*, x(t)+\beta(t)\dot{x}(t)-x^*\rangle
			=-\langle \lambda^*, A(x(t)+\beta(t)\dot{x}(t))-b\rangle\\
			\ge &-\Vert \lambda^*\Vert \Vert A(x(t)+\beta(t)\dot{x}(t))-b\Vert \ge -\frac{\Vert \lambda^*\Vert C_3}{t^2\xi(t)}.
		\end{aligned}
	\end{equation*}
	Therefor, for all $t\ge t_2$, we have 
	\begin{equation*}
		\vert f(x(t)+\beta(t)\dot{x}(t))-f(x^*)\vert =\mathcal{O}(\frac{1}{t^2\xi(t)}).
	\end{equation*}
	This completes the proof of Theorem \ref{thm2.1}.
\end{proof}

\begin{remark}
	He et al. \cite{bib20} introduced a mixed primal-dual dynamical system incorporating explicit Hessian-driven damping and established its favorable convergence properties. As demonstrated in Theorem \ref{thm2.1}, when $\alpha > 3$, the proposed dynamical system \eqref{equ2} attains a convergence rate of $\mathcal{O}(\frac{1}{t^2\xi(t)})$, matching that of the dynamical system in \cite{bib20}. Moreover, numerical experiments will show that dynamical system \eqref{equ2} achieves comparable suppression of trajectory oscillations while avoiding the explicit Hessian computation required by \cite{bib20}.
\end{remark}
\begin{remark}
	In particular, when $\gamma = \beta = 0$, the dynamical system \eqref{equ2} reduces to a form similar to that proposed by Bo\c{t} et al. in \cite{bib16}. It can also be interpreted as a scaled version of the dynamical system (PD-AVD) with $\theta = \frac{3}{2\alpha}$. When $\xi(t) = 1$, the dynamical system \eqref{equ2} exhibits the same  convergence properties as the dynamical system (PD-AVD). In the special case $A = 0$ and $b = 0$, the dynamical system \eqref{equ2} coincides with the one studied by Alecsa et al. in \cite{bib19}, and Theorem \ref{thm2.1} recovers the convergence rates in \cite[Theorem 12]{bib19}. 
\end{remark}

\section{Primal-dual dynamical system (3) with Tikhonov regularization}\label{sec3}

In Section~\ref{sec2}, we establish the boundedness of the trajectory and the fast convergence properties of the dynamical system \eqref{equ2}. However, the global convergence of the primal trajectory $x(t)$, particularly whether it converges to a solution of the problem \eqref{equ1}, cannot be guaranteed. It is well known that incorporating a Tikhonov regularization term into the dynamical system can often ensure strong convergence of the trajectory (see, e.g., \cite{bib40,bib18-1,bib36}). To enhance the strong convergence property, we introduce a Tikhonov regularization term $\epsilon(t)x(t)$ into the primal-dual dynamical system \eqref{equ2}, resulting in a modified dynamical system that combines implicit Hessian damping with Tikhonov regularization:
\begin{equation}\label{equ2_1}
	\begin{cases}
		\begin{aligned}
			&\ddot{x}(t) + \frac{\alpha}{t} \dot{x}(t) + \xi(t) (\nabla_x\mathcal{L}_{\rho}(\hat{x}(t),\bar{\lambda}(t)) + {\epsilon(t) x(t)}) = 0, \\
			&\ddot{\lambda}(t) + \frac{\alpha}{t} \dot{\lambda}(t) - \xi(t) (\nabla_{\lambda}\mathcal{L}_{\rho}(\bar{x}(t),\lambda(t))+\eta(t)A\dot{x}(t)-\frac{3}{2\alpha} t\xi(t)\beta(t)A(\nabla_x\mathcal{L}_{\rho}(\hat{x}(t), \bar{\lambda}(t))+{\epsilon(t) x(t)}))= 0,
		\end{aligned}
	\end{cases} 
\end{equation}
where $\epsilon : [t_0, +\infty) \rightarrow \mathbb{R}+$ is a $\mathcal{C}^1$ non-increasing function satisfying $\lim\limits_{t\rightarrow +\infty}\epsilon(t) = 0$. All other parameters and trajectories are defined as in Section~\ref{sec2}.  We will demonstrate that the new dynamical system \eqref{equ2_1} not only retains the desirable convergence properties of the dynamical system \eqref{equ2}, such as rapid convergence and reduced oscillations, but also ensures the strong convergence of the trajectory. Detailed convergence analysis is provided in this section, while the existence and uniqueness of a global solution to the associated Cauchy problem is addressed in Appendix~\ref{Appendix A}.

\subsection{Fast convergence rates}
In this subsection, we analyze the fast convergence properties of the dynamical system \eqref{equ2_1}, which indeed exhibits the same asymptotic convergence behavior as the dynamical system \eqref{equ2}. Firstly, we make the following assumption about the Tikhonov regularization parameter $\epsilon(t)$:
\vspace{2mm}

\noindent\textbf{Assumption 3.1}: There exist $M>1$ and $t'_1\ge t_0$ such that
\begin{equation}\label{equ16_3}
	2\dot{\epsilon}(t)+M|(\gamma+\frac{\beta}{t})|\xi(t)\epsilon(t )^2\leq 0,\;(\forall t\ge t'_1)
\end{equation}
where $\gamma>0, \beta\in\mathbb{R}$ or $\gamma=0, \beta\ge 0$.

The assumption is a natural condition that was previously introduced in \cite{bib37}. Moreover, this assumption is easily satisfied. A typical example is given by $\epsilon(t)=\frac{a}{t^r}$ with $a>0$ and $r\ge 1$. Based on this standard assumption, we proceed to analyze the fast convergence rates of the dynamical system \eqref{equ2_1}.

\begin{lemma}\label{lem3.1}
	Let $(x(t), \lambda(t))$ be a solution of the dynamical system $\rm{\eqref{equ2_1}}$ and $(x^*, \mu)\in\mathbb{F}\times\mathcal{Y}$. Denote
	\begin{equation}\label{equ-lem3.1-1}
		E_{\mu}^{\epsilon}(t)=E_{\mu}(t)+\frac{t^2\xi(t)\epsilon(t)}{2}\Vert x(t)\Vert^2,
	\end{equation}
	where $E_{\mu}(t)$ is defined by $\eqref{equ3}$. Assume that $t_0>0$, $\alpha>3$, $\xi$ is a differentiable function that satisfies the condition: there exist $t'_1\ge t_0$ and a sufficiently small constant $\delta>0$ such that \eqref{equ15} holds, and $\epsilon$ is a continuously differentiable and nonincreasing function  that satisfies the \textbf{Assumption 3.1}. Then, there exist $t_2\ge t_0$ and $p_2>0$  such that for all $t\ge t_2$,
	\begin{equation}\label{equ17_3}
		\begin{aligned}
			\dot{E}_{\mu}^{\epsilon}(t)&+\delta t\xi(t)(\mathcal{L}_{\rho}(x(t)+\beta(t)\dot{x}(t), \mu)-\mathcal{L}_{\rho}(x^*, \mu))
			+\delta t\xi(t)\frac{\epsilon(t)}{2}\Vert x(t)\Vert^2\\
			&+\frac{\rho\alpha}{3}t\xi(t)\Vert A(x(t)+\beta(t)\dot{x}(t))-b\Vert^2
			+p_2t\Vert \dot{x}(t)\Vert^2+ \frac{\alpha-3}{3}t\Vert \dot{\lambda}(t)\Vert^2
			\leq \frac{\alpha}{3}t\xi(t)\epsilon(t)\Vert x^*\Vert^2.
		\end{aligned}
	\end{equation}
\end{lemma}
\begin{proof}
	Differentiating \eqref{equ-lem3.1-1} with respect to $t$ yields
	\begin{equation}\label{equ-lem3.1-2}
		\dot{E}_{\mu}^{\epsilon}(t)=\dot{E}_{\mu}(t)+\frac{1}{2}(2t\xi(t)\epsilon(t)+t^2\dot{\xi}(t)\epsilon(t)+t^2\xi(t)\dot{\epsilon}(t))\Vert x(t)\Vert^2+\frac{t^2\xi(t)\epsilon(t)}{2}\langle x(t),\dot{x}(t)\rangle.
	\end{equation}
	Since $f(x)+\frac{\epsilon(t)}{2}\Vert x\Vert^2$ is an $\epsilon(t)$-strongly convex function, we obtain
	\begin{equation}\label{equ5_3}
		\begin{aligned}
			&f(x^*)+\frac{\epsilon(t)}{2}\Vert x^*\Vert^2-f(x(t)+\beta(t)\dot{x}(t))-\frac{\epsilon(t)}{2}\Vert x(t)+\beta(t)\dot{x}(t)\Vert^2\\
			\ge &\langle \nabla f(x(t)+\beta(t)\dot{x}(t))+\epsilon(t)(x(t)+\beta(t)\dot{x}(t)), x^*-(x(t)+\beta(t)\dot{x}(t))\rangle +\frac{\epsilon(t)}{2}\Vert  x(t)+\beta(t)\dot{x}(t)-x^*\Vert^2,
		\end{aligned} 
	\end{equation}
	In addition that, an easy computation shows that
	\begin{equation*}
		\begin{aligned}
			&-\frac{\epsilon(t)}{2}(\Vert x(t)+\beta(t)\dot{x}(t)\Vert^2+\Vert  x(t)+\beta(t)\dot{x}(t)-x^*\Vert^2)\\
			=&-\beta(t)\epsilon(t)\langle \dot{x}(t),  2x(t)+\beta(t)\dot{x}(t)-x^*\rangle 
			-\frac{\epsilon(t)}{2}(\Vert x(t)\Vert^2+\Vert x(t)-x^*\Vert^2),
		\end{aligned}
	\end{equation*}
	which together with \eqref{equ5_3} yields
	\begin{equation}\label{equ12_3}
		\begin{aligned}
			&-\langle \nabla_x\mathcal{L}_{\rho}(x(t)+\beta(t)\dot{x}(t),\mu)+\epsilon(t)x(t), x(t)-x^*\rangle\\
			\leq & \mathcal{L}_{\rho}(x^*,  \mu)-\mathcal{L}_{\rho}(x(t)+\beta(t)\dot{x}(t), \mu)-\frac{\rho}{2}\Vert A(x(t)+\beta(t)\dot{x}(t))-b\Vert^2+\beta(t)\langle \mu-(\lambda(t)+\frac{3}{2\alpha} t\dot{\lambda}(t)),A\dot{x}(t)\rangle\\
			&+\frac{\epsilon(t)}{2}(\Vert x^*\Vert^2-\Vert x(t)\Vert^2-\Vert x(t)-x^*\Vert^2)
			+\beta(t)\langle \nabla_x\mathcal{L}_{\rho}(x(t)+\beta(t)\dot{x}(t),\lambda(t)+\frac{3}{2\alpha} t\dot{\lambda}(t)),\dot{x}(t)\rangle.
		\end{aligned}
	\end{equation}
	Furthermore, by using Young's inequality, we estimate that for all positive constant $M$,
	\begin{equation}\label{equ14_3}
		\begin{aligned}
			&-t^2\xi(t)^2\beta(t)\epsilon(t)\langle \nabla_x\mathcal{L}_{\rho}(x(t)+\beta(t)\dot{x}(t), \lambda(t)+\frac{3}{2\alpha} t\dot{\lambda}(t)), x(t)\rangle\\
			\leq & \frac{1}{2}(\frac{2|\beta(t)|t^2\xi(t)^2}{M}\Vert \nabla_x\mathcal{L}_{\rho}(x(t)+\beta(t)\dot{x}(t), \lambda(t)+\frac{3}{2\alpha} t\dot{\lambda}(t))\Vert^2+\frac{M|\beta(t)|t^2\xi(t)^2\epsilon(t)^2}{2}\Vert x(t)\Vert^2).
		\end{aligned}
	\end{equation}
	Under the assumptions \eqref{equ15} and \eqref{equ16_3}, it holds that for all $t\ge\bar{t}_1:=\max\left\{t_1, t'_1\right\}$
	\begin{equation}\label{equ45_3}
		((2-\frac{2\alpha}{3})t\xi(t)+t^2\dot{\xi}(t))\frac{\epsilon(t)}{2}+t^2\xi(t)\frac{\dot{\epsilon}(t)}{2}+\frac{M|\beta(t)|t^2\xi(t)^2\epsilon(t )^2}{4}\leq -\delta t\xi(t)\frac{\epsilon(t)}{2}.
	\end{equation}
	Finally, substituting \eqref{equ11} into \eqref{equ-lem3.1-2}, and combining the result with \eqref{equ-lem2.1-2}, \eqref{equ44}, \eqref{equ101_1}, \eqref{equ12_3}, \eqref{equ14_3} and \eqref{equ45_3}, we obtain that there exist $p_2>0$ such that for all $t\ge t_2:=\max\left\{\bar{t}_1, t'_2, t''_2\right\}$, the estimate \eqref{equ17_3} holds. This completes the proof of Lemma \ref{lem3.1}.
\end{proof}

\begin{lemma}\label{lem3.2}
	Let $t_0>0$, $(x(t), \lambda(t))$ be a solution of the dynamical system $\rm{\eqref{equ2_1}}$ and $(x^*, \mu)\in\mathbb{F}\times\mathcal{Y}$. Suppose that the assumptions of Lemma \ref{lem3.1} hold. Define $\mu(\cdot):[t_2,+\infty)\to \mathcal{Y}$  as in \eqref{equ102_1}. Then, there exists $\bar{t}_2\ge t_2$ such that $E_{\mu(s)}^{\epsilon}$ is bounded on $\left[\bar{t}_2, +\infty\right)$.
\end{lemma}
\begin{proof}
	Firstly, based on the result of Lemma \ref{lem3.1}, we integrate equation \eqref{equ17_3} with $\mu=\lambda^*$ over the interval $[t_2, t]$. Under the assumption  $\int_{t_0}^{+\infty} t\xi(t)\epsilon(t) dt<+\infty$, it is straightforward to obtain that $E_{\mu}^{\epsilon}(t)$ is bounded. Then, following an argument similar to the proof of Lemma \ref{lem2.2}, we obtain that there exists a constant $C_4$ such that for all $\mu\in B(\lambda^*; 1)$,
	\begin{equation}\label{equ34_3}
		E_{\mu}^{\epsilon}(t)
		\leq C_4+t^2\xi(t)\langle \mu -\lambda^*, A(x(t)+\beta(t)\dot{x}(t))-b\rangle, 
	\end{equation}
	and
	\begin{equation*}
		 \sup_{\mu\in B(\lambda^*;1)} E_{\mu}^{\epsilon}(t_2)<+\infty. 
	\end{equation*}
	Thus, it follows from \eqref{equ17_3} and \eqref{equ33} that 
	\begin{equation*}
		\begin{aligned}
			\dot{E}_{\mu(s)}(t)\leq & -\delta t\xi(t)(\mathcal{L}_{\rho}(x(t)+\beta(t)\dot{x}(t), \mu(s))-\mathcal{L}_{\rho}(x^*, \mu(s)))+\frac{\alpha}{3}t\xi(t)\epsilon(t)\Vert x^*\Vert^2\\
			\leq & -\delta t\xi(t)\langle \mu(s) -\lambda^*, A(x(t)+\beta(t)\dot{x}(t))-b\rangle+\frac{\alpha}{3}t\xi(t)\epsilon(t)\Vert x^*\Vert^2.
		\end{aligned}
	\end{equation*}
	Multiplying both sides of the above inequality by $t^\delta$ and combing with \eqref{equ34_3} yield that
	\begin{equation}\label{equ36_3}
		\begin{aligned}
			E_{\mu(s)}(t)\leq E_{\mu(s)}(t_2)+C_4+\frac{\alpha}{3}\Vert x^*\Vert^2\frac{1}{t^{\delta}}\int_{t_2}^{t}\tau^{\delta+1}\xi(\tau)\epsilon(\tau)d\tau.
		\end{aligned}
	\end{equation}
	What's more, under the assumption $\int_{t_0}^{+\infty} t\xi(t)\epsilon(t)<+\infty$ , Lemma \ref{lemB.2} applied to the functions  $\psi(t) =t^\delta$ and $\phi(t)=t\xi(t)\epsilon(t)$ provides $\lim\limits_{t\rightarrow+\infty}\frac{1}{t^{\delta}}\int_{t_2}^{t}\tau^{\delta+1}\xi(\tau)\epsilon(\tau)d\tau =0$, Hence, there exist $\bar{t}_2\ge t_2$ and $C_5$ such that for all $t\ge\bar{t}_2$,
	\begin{equation*}
		\frac{1}{t^{\delta}}\int_{t_2}^{t}\tau^{\delta+1}\xi(\tau)\epsilon(\tau)d\tau \leq C_5,
	\end{equation*}
	which together with $ \sup_{\mu\in B(\lambda^*;1)} E_{\mu}(t_2)<+\infty $ applied to \eqref{equ36_3} yields the boundedness of $E_{\mu(s)}$ on $\left[\bar{t}_2, +\infty\right)$.  This completes the proof of Lemma \ref{lem3.2}.
\end{proof}

\begin{theorem}\label{thm3.1_3}
	Suppose that the assumptions of Lemma  \ref{lem3.1} hold. Then, for any trajectory $(x(t), \lambda(t))$ of the dynamical system \eqref{equ2_1} and any primal-dual optimal solution $(x^*, \lambda^*) \in\Omega$ of the problem \eqref{equ1}, the following conclusions hold:
	
	\noindent(i) If $\epsilon$ further satisfies $\int_{t_0}^{+\infty}t\xi(t)\epsilon(t) dt<+\infty$ and $\lim\limits_{t\rightarrow+\infty}t^2\xi(t)=+\infty$, then $(x(\cdot), \lambda(\cdot))$ is bounded and the following conclusions hold:\vspace{2mm}
	
	\textbf{(Integral estimates)}
	\begin{equation*}
		\begin{cases}
			\int_{t_2}^{+\infty} t\xi(t)(\mathcal{L}_{\rho}(x(t)+(\gamma+\frac{\beta}{t})\dot{x}(t), \lambda^*)-\mathcal{L}_{\rho}(x^*, \lambda^*))dt<+\infty, \vspace{2mm}\\
			\int_{t_2}^{+\infty}t\xi(t)\epsilon(t)\Vert x(t)\Vert^2 dt<+\infty, \vspace{2mm}\\
			\int_{t_2}^{+\infty}t(\Vert \dot{x}(t)\Vert^2+\Vert \dot{\lambda}(t)\Vert^2) dt<+\infty, \vspace{2mm}\\
			\int_{t_2}^{+\infty}t\xi(t)\Vert A(x(t)+(\gamma+\frac{\beta}{t})\dot{x}(t))-b\Vert^2 dt<+\infty;
		\end{cases}
	\end{equation*}
	
	\textbf{(Pointwise estimates)}  
	\begin{equation*}
		\begin{cases}
			\mathcal{L}_{\rho}(x(t)+(\gamma+\frac{\beta}{t})\dot{x}(t), \lambda^*)-\mathcal{L}_{\rho}(x^*, \lambda^*)=\mathcal{O}(\frac{1}{t^2\xi(t)}),\\
			\Vert (\dot{x}(t), \dot{\lambda}(t))\Vert=\mathcal{O}(\frac{1}{t}),\\
			\Vert A(x(t)+(\gamma+\frac{\beta}{t})\dot{x}(t))-b\Vert=\mathcal{O}(\frac{1}{t^2\xi(t)}),\\
			\vert f(x(t)+(\gamma+\frac{\beta}{t})\dot{x}(t))-f(x^*)\vert =\mathcal{O}(\frac{1}{t^2\xi(t)}).
		\end{cases}
	\end{equation*}
	
	\noindent (ii) If $\epsilon$ further satisfies $\int_{t_0}^{+\infty}\frac{\xi(t)\epsilon(t)}{t} dt<+\infty$, then $\lim\limits_{t\rightarrow+\infty}(\dot{x}(t), \dot{\lambda}(t))=\mathbf{0}$.
\end{theorem}
\begin{proof}
	By proceeding along the lines of the proof of Theorem \ref{thm2.1}, we first integrate \eqref{equ17_3} with $\mu=\lambda^*$. Then, in light of the assumption  $\int_{t_0}^{+\infty}t\xi(t)\epsilon(t) dt<+\infty$ and the definition of $E_{\lambda^*}^{\epsilon}$, we obtain all conclusions in (i), excluding the convergence rates for the constraint violation and the objective residual, whose derivation will be provided at the end.
	
	For the conclusion in (ii), based on the integrated result, and taking into account that $\int_{t_0}^{+\infty}\frac{\xi(t)\epsilon(t)}{t} dt<+\infty$, we deduce that for all $t\ge t_2$,
	\begin{equation}\label{equ21_3}
		\Vert  \frac{2\alpha}{3t}((x(t), \lambda(t))-(x^*, \lambda^*))+(\dot{x}(t), \dot{\lambda}(t))\Vert^2\leq \frac{E_{\lambda^*}^{\epsilon}(t_2)}{t^2}+\frac{\alpha}{3}\frac{\Vert x^*\Vert^2}{t^2}\int_{t_2}^{t}s\xi(s)\epsilon(s) ds,
	\end{equation}
	and 
	\begin{equation}\label{equ22_3}
		\Vert \frac{1}{t}((x(t), \lambda(t))-(x^*, \lambda^*))\Vert^2\leq \frac{E_{\lambda^*}^{\epsilon}(t_2)}{t^2}+\frac{\alpha}{3}\frac{\Vert x^*\Vert^2}{t^2}\int_{t_2}^{t}s\xi(s)\epsilon(s) ds.
	\end{equation}
	Under the assumption $\int_{t_0}^{+\infty} \frac{\xi(t)\epsilon(t)}{t}<+\infty$ , applying Lemma \ref{lemB.2} with $\psi(t) =t^2$, $\phi(t)=\frac{\xi(t)\epsilon(t)}{t}$ to \eqref{equ21_3} and \eqref{equ22_3}, we obtain
	\begin{equation}\label{equ30_3}
		\lim\limits_{t\rightarrow +\infty}\frac{1}{t^2}\int_{t_2}^{t} s^2\frac{\xi(s)\epsilon(s)}{s}ds=0,
	\end{equation}
	which together with \eqref{equ21_3} and \eqref{equ22_3} yields 
	\begin{equation*}
		\lim\limits_{t\rightarrow+\infty}\Vert (\dot{x}(t), \dot{\lambda}(t))\Vert =0,\quad i.e.,\quad \lim\limits_{t\rightarrow+\infty}(\dot{x}(t), \dot{\lambda}(t))=\mathbf{0}.
	\end{equation*}
	For the convergence rates of the constraint violation and the objective residual, we obtain the corresponding convergence rates for these quantities along the trajectory of the dynamical system \eqref{equ2_1} by incorporating the results of Lemma \ref{lem3.2} and following the same analysis used in Theorem \ref{thm2.1} for deriving the convergence rates of the constraint violation and the objective residual. This completes the proof of Theorem \ref{thm3.1_3}.
\end{proof}

\subsection{Strong convergence of the trajectory}

In this subsection, we establish the strong convergence of the trajectory generated by the dynamical system \eqref{equ2_1} to the minimum-norm solution of the problem \eqref{equ1}, under the assumption that the Tikhonov regularization parameter $\epsilon(t)$ vanishes at a suitable rate. To facilitate the analysis, we first recall some key properties of Tikhonov regularization that will play a crucial role in our convergence proof.

Throughout this subsection, and without causing any ambiguity, we denote by $x^*$ the element of minimal norm in the solution set $S$ of the problem \eqref{equ1}, and by $\lambda^*$ the associated optimal dual variable such that $(x^*,\lambda^*)\in \Omega$. Then, $x^*= \rm{Proj}_S 0$.
For any $\epsilon>0$, we denote by $x_\epsilon$ the unique solution of the strongly convex minimization problem
\begin{equation*}
	x_\epsilon:={\rm{arg}}\min_{x\in \mathcal{X}}\mathcal{L}_{\epsilon}(x), \quad\text{where } \mathcal{L}_{\epsilon}(x):=\mathcal{L}_{\rho}(x, \lambda^*)+\frac{\epsilon}{2}\Vert x\Vert^2.
\end{equation*}
The first-order optimality condition leads to
\begin{equation}\label{equ41}
	\nabla\mathcal{L}_{\epsilon}(x_\epsilon)=\nabla_x\mathcal{L}_{\rho}(x_\epsilon,\lambda^*)+\epsilon x_\epsilon=0.
\end{equation}
From the classical properties of the Tikhonov regularization, we have
$\lim\limits_{\epsilon\rightarrow 0}\Vert x_\epsilon-x^* \Vert =0$.
which was originally established by Tikhonov \cite{bib31}. Furthermore, Since $\nabla\mathcal{L}_{\rho}(x, \lambda^*)$ is a monotone operator and $\nabla_x\mathcal{L}_{\rho}(x^*, \lambda^*)=0$ ,   we deduce from the optimality condition \eqref{equ41} that
\begin{equation*}
	\Vert x_\epsilon\Vert\leq\Vert x^*\Vert,\; \text{for all }\epsilon>0.
\end{equation*}

With the classical results in place, we are now in a position to state the strong convergence of the trajectory generated by the the dynamical system \eqref{equ2_1}.
\begin{theorem}\label{thm3.3}
	Suppose that  $\int_{t_0}^{+\infty}\frac{\xi(t)\epsilon(t)}{t} dt<+\infty$  and the assumptions of Lemma \ref{lem3.1} are holds. Assume further that $\xi$ and $\epsilon$ satisfy
	\begin{equation*}
		\lim\limits_{t\rightarrow+\infty}t^2\xi(t)\epsilon(t)=+\infty,\quad \liminf_{t\rightarrow+\infty}\xi(t)\neq 0,\text{ and }\lim\limits_{t\rightarrow+\infty} \frac{1}{t^2\xi(t)\epsilon(t)} \int_{t_0}^{t} s^2\xi(s)^2\epsilon(s)^2 ds=0.
	\end{equation*}
	Let $(x(\cdot), \lambda(\cdot))$ be a global solution of the dynamical system $\rm{\eqref{equ2_1}}$, then 
	\begin{equation*}
		\liminf_{t\rightarrow+\infty}\Vert x(t)-x^*\Vert=0.
	\end{equation*}
	where $x^*=\rm{Proj}_S 0$. What's more, if there exists a large enough T such that the trajectory $\left\{ x(t):t\ge T\right\}$ stays in either the open ball $B(0,\Vert x^*\Vert)$ or its complement, then 
	\begin{equation}\label{equ24}
		\lim\limits_{t\rightarrow+\infty}\Vert x(t)-x^*\Vert=0.
	\end{equation}
\end{theorem}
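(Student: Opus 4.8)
The plan is to adapt the by-now standard Tikhonov-type argument (in the spirit of Attouch et al.\ \cite{bib21, bib37} and Zhu et al.\ \cite{bib23}) to the present primal-dual setting with implicit Hessian damping. The starting point is the strongly-convex auxiliary trajectory $x_{\epsilon(t)}$, characterized by the optimality condition \eqref{equ41}, namely $\nabla_x\mathcal{L}_{\rho}(x_{\epsilon(t)},\hat{\lambda}^*)+\epsilon(t)x_{\epsilon(t)}=0$, together with the two classical facts recalled just before the theorem: $\|x_{\epsilon(t)}\|\le\|\hat{x}^*\|$ and $x_{\epsilon(t)}\to\hat{x}^*$ as $t\to\infty$. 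By the triangle inequality, $\|x(t)-\hat{x}^*\|\le\|x(t)-x_{\epsilon(t)}\|+\|x_{\epsilon(t)}-\hat{x}^*\|$, so it suffices to control $\|x(t)-x_{\epsilon(t)}\|$. I would introduce the anchored energy
\[
  W(t)=\tfrac12\Big\|\tfrac{2\alpha}{3}\big((x(t),\lambda(t))-(x_{\epsilon(t)},\hat{\lambda}^*)\big)+t(\dot x(t),\dot\lambda(t))\Big\|^2+\tfrac{\alpha(\alpha-3)}{9}\big\|(x(t),\lambda(t))-(x_{\epsilon(t)},\hat{\lambda}^*)\big\|^2,
\]
or a close variant including a term $t^2\xi(t)\big(\mathcal{L}_\epsilon(x(t)+\beta(t)\dot x(t))-\mathcal{L}_\epsilon(x_{\epsilon(t)})\big)$, and differentiate it along \eqref{equ2_1}.

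The core estimate is to show, using the $\epsilon(t)$-strong convexity of $\mathcal{L}_{\rho}(\cdot,\hat\lambda^*)+\frac{\epsilon(t)}{2}\|\cdot\|^2$, that the dissipative part of $\dot W(t)$ produces a term of the form $-c\,t\xi(t)\epsilon(t)\|x(t)-x_{\epsilon(t)}\|^2$ (this is where the regularization "bites"), while the remaining terms are controlled: the implicit-Hessian cross terms and the $\nabla_x\mathcal{L}_\rho$ quadratic term are handled by Young's inequality exactly as in Lemmas \ref{lem2.1} and \ref{lem3.1}; the velocity terms $\|\dot x\|^2,\|\dot\lambda\|^2$ are absorbed thanks to $\alpha>3$; and the new terms generated by $\frac{d}{dt}x_{\epsilon(t)}$ need a bound on $\|\dot x_{\epsilon(t)}\|$. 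Here I would use the standard estimate $\|\dot x_{\epsilon(t)}\|\le -\frac{\dot\epsilon(t)}{\epsilon(t)}\|x_{\epsilon(t)}\|$ (obtained by differentiating \eqref{equ41} and using monotonicity of $\nabla_x\mathcal{L}_\rho(\cdot,\hat\lambda^*)$), together with $\|x_{\epsilon(t)}\|\le\|\hat x^*\|$. The residual forcing terms then aggregate to something of size $t^2\xi(t)^2\epsilon(t)^2\|\hat x^*\|^2$ (plus lower-order pieces), which is precisely why the hypothesis $\lim_{t\to\infty}\frac{1}{t^2\xi(t)\epsilon(t)}\int_{t_0}^t s^2\xi(s)^2\epsilon(s)^2\,ds=0$ is imposed: after a Grönwall-type integration against the integrating factor associated with $t\xi(t)\epsilon(t)$, this condition forces $\liminf_{t\to\infty}\|x(t)-x_{\epsilon(t)}\|=0$, whence $\liminf_{t\to\infty}\|x(t)-\hat x^*\|=0$.

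For the promotion from $\liminf$ to $\lim$ under the extra dichotomy hypothesis, I would argue as in \cite{bib21}: the function $t\mapsto\|x(t)\|$ is continuous, so if the trajectory eventually stays inside $B(0,\|\hat x^*\|)$ then $\|x(t)\|\le\|\hat x^*\|$ for $t\ge T$, and one combines this with the monotonicity-type property $\langle x(t)-x_{\epsilon(t)},x_{\epsilon(t)}\rangle\ge 0$ (a consequence of \eqref{equ41} and monotonicity, which gives $\|x_{\epsilon(t)}\|\le\|\hat x^*\|$ and the obtuse-angle inequality), so that $\|x(t)-\hat x^*\|^2$ can be shown to be, up to vanishing terms, nonincreasing or squeezed between the $\liminf$-vanishing quantity and something controlled; the case where the trajectory stays in the complement is symmetric. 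In both cases the already-established $\liminf=0$ pins the limit to $0$, giving \eqref{equ24}.

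The main obstacle I anticipate is bookkeeping the extra terms coming from the time-dependence of the anchor $x_{\epsilon(t)}$ and from the implicit Hessian evaluation point $\hat x(t)=x(t)+\beta(t)\dot x(t)$ simultaneously: the strong-convexity inequality is most naturally written at the point $\hat x(t)$, but the energy is anchored at $x(t)$, so a careful accounting of the $\beta(t)\dot x(t)$ discrepancies (as already done via \eqref{equ5_3}--\eqref{equ12_3} in Lemma \ref{lem3.1}) must be repeated with $x^*$ replaced by the moving target $x_{\epsilon(t)}$, and one must verify that all the newly generated cross terms are either sign-definite or integrable/dominated under the stated hypotheses on $\xi,\epsilon$. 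Getting the constants to line up so that the coefficient of $t\xi(t)\epsilon(t)\|x(t)-x_{\epsilon(t)}\|^2$ stays strictly negative for large $t$ — using $\liminf_{t\to\infty}\xi(t)\ne 0$ and $\lim_{t\to\infty}t^2\xi(t)\epsilon(t)=+\infty$ to dominate the lower-order leftovers — is the delicate quantitative step.
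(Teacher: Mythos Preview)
Your overall instinct—compare $x(t)$ to the Tikhonov minimizer $x_{\epsilon(t)}$ via strong convexity and then use $x_{\epsilon(t)}\to\hat x^*$—is correct and is indeed the backbone of the paper's argument, but your implementation diverges from the paper's in ways that create real gaps. The paper does \emph{not} anchor the Lyapunov energy at the moving point $x_{\epsilon(t)}$ and never differentiates $t\mapsto x_{\epsilon(t)}$. Instead it anchors at the \emph{fixed} point $\hat x^*$ and runs a three-case split on the eventual position of $x(t)$ relative to the sphere $\{\|x\|=\|\hat x^*\|\}$. In Case I (trajectory eventually outside), an energy $\widetilde E(t)$, built like \eqref{equ3_3} but rescaled by $t^{-2}$ and carrying the extra nonnegative term $\xi(t)\tfrac{\epsilon(t)}{2}(\|x(t)\|^2-\|\hat x^*\|^2)$, satisfies $\tfrac{2}{t}\widetilde E+\dot{\widetilde E}\le C\,\xi(t)^2\epsilon(t)^2$; after multiplying by $t^2$ and integrating, one invokes the $\epsilon(t)$-strong convexity of $\mathcal L_{\epsilon(t)}$ \emph{a posteriori} to bound $\tfrac{\epsilon(t)}{2}\|x(t)-x_{\epsilon(t)}\|^2$ by $\widetilde E(t)/\xi(t)+o(1)$, divides by $\xi(t)\epsilon(t)$, and uses the two limit hypotheses to conclude $\|x(t)-x_{\epsilon(t)}\|\to 0$. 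In Case II (trajectory eventually inside), a companion energy $\widehat E(t)$ is shown, via $\int\frac{\xi\epsilon}{t}<\infty$ and Lemma~\ref{lemB.2}, to tend to zero; combined with $\liminf\xi\ne 0$ this forces $\mathcal L_\rho(x(t)+\beta(t)\dot x(t),\hat\lambda^*)\to\mathcal L_\rho(\hat x^*,\hat\lambda^*)$, so by weak lower semicontinuity every weak cluster point of $x(t)$ lies in $S$, hence equals $\hat x^*$ by the minimum-norm property together with $\limsup\|x(t)\|\le\|\hat x^*\|$; weak plus norm convergence then gives strong convergence. Case III (infinitely many boundary crossings) supplies a subsequence with $\|x(t_n)\|=\|\hat x^*\|$, and repeating the Case-II cluster-point argument on it yields only the $\liminf$ conclusion.

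Against this, your plan has two concrete problems. First, the moving-anchor energy $W(t)$ generates terms involving $\tfrac{d}{dt}x_{\epsilon(t)}$, and your proposed control $\|\dot x_{\epsilon(t)}\|\le-\tfrac{\dot\epsilon(t)}{\epsilon(t)}\|x_{\epsilon(t)}\|$ is unusable without an \emph{upper} bound on $-\dot\epsilon/\epsilon$, which is nowhere among the hypotheses of Theorem~\ref{thm3.3}; the paper's fixed-anchor route sidesteps this entirely, bringing in $x_{\epsilon(t)}$ only through the static inequality \eqref{equ28}. Second, your dichotomy step is not right: the claimed ``monotonicity-type property'' $\langle x(t)-x_{\epsilon(t)},x_{\epsilon(t)}\rangle\ge 0$ does not follow from \eqref{equ41} or monotonicity of $\nabla_x\mathcal L_\rho(\cdot,\hat\lambda^*)$, and no ``nonincreasing'' or squeeze argument of the kind you sketch is available. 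The actual mechanism in the ``inside'' case is the weak-lsc/minimum-norm identification of cluster points described above, which you do not invoke; and the $\liminf$ (as opposed to $\lim$) in the general statement comes precisely from the boundary-crossing Case III, not from any looseness in a Gr\"onwall estimate.
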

\begin{proof}
	Depending on three positions of the trajectory $x(\cdot)$ relative to the Ball $B(0,\Vert x^*\Vert)$, we analyze separately the following three situations.
	
	\textbf{Case I:} There exists a large enough T such that $x(t)$ stays in the complement of $B(0,\Vert x^*\Vert)$, i.e.,
	\begin{equation}\label{equ27}
		\Vert x(t)\Vert\ge\Vert x^*\Vert,\;\text{for all }t\ge T.
	\end{equation}
	Define a new energy function $\widetilde{E}:\left[t_0,+\infty\right)\rightarrow\left[0,+\infty\right)$ by
	\begin{equation*}\label{equ27}
		\begin{aligned}
			\widetilde{E}(t)=\frac{E_{\lambda^*}^{\epsilon}(t)}{t^2}-\frac{\xi(t)\epsilon(t)}{2}\Vert x^*\Vert^2.
		\end{aligned}
	\end{equation*}
	A similar argument to the proof of Lemma \ref{lem3.1} shows that
	\begin{equation*}
		\begin{aligned}
			\frac{2}{t}\widetilde{E}+\dot{\widetilde{E}}(t)\leq &(\frac{2(3-\alpha)}{3t}\xi(t)+\dot{\xi}(t))(\mathcal{L}_{\rho}(x(t)+\beta(t)\dot{x}(t), \lambda^*)-\mathcal{L}_{\rho}(x^*, \lambda^*))\\
			&+(\dot{\xi}(t)\frac{\epsilon(t)}{2}+\xi(t)\frac{\dot{\epsilon}(t)}{2}+\frac{3-\alpha}{3t}\xi(t)\epsilon(t))(\Vert x(t)\Vert^2-\Vert x^*\Vert^2)\\
			&+\xi(t)(\dot{\beta}(t)-\frac{\alpha}{3t}\beta(t))\langle \nabla_x\mathcal{L}_{\rho}(x(t)+\beta(t)\dot{x}(t), \lambda(t)+\frac{3}{2\alpha} t\dot{\lambda}(t)),\dot{x}(t)\rangle\\
			&-\xi(t)^2\beta(t)\Vert \nabla_x\mathcal{L}_{\rho}(x(t)+\beta(t)\dot{x}(t), \lambda(t)+\frac{3}{2\alpha} t\dot{\lambda}(t))\Vert ^2\\
			&-\xi(t)^2\beta(t)\epsilon(t)\langle \nabla_x\mathcal{L}_{\rho}(x(t)+\beta(t)\dot{x}(t), \lambda(t)+\frac{3}{2\alpha} t\dot{\lambda}(t)), x(t)\rangle\\
			&+\frac{3-\alpha}{3t}(\Vert \dot{x}(t)\Vert^2+\Vert \dot{\lambda}(t)\Vert^2)-\frac{\alpha}{3t}\xi(t)\epsilon(t)\Vert x(t)-x^*\Vert^2\\
			&-\frac{\rho\alpha}{3t}\xi(t)\Vert A(x(t)+\beta(t)\dot{x}(t))-b\Vert^2.
		\end{aligned}
	\end{equation*}
	Moreover,  by using Young's inequality, we derive that for all $M, q >0$,
	\begin{equation*}
		\begin{aligned}
			&\xi(t)(\dot{\beta}(t)-\frac{\alpha}{3t}\beta(t))\langle \nabla_x\mathcal{L}_{\rho}(x(t)+\beta(t)\dot{x}(t), \lambda(t)+\frac{3}{2\alpha} t\dot{\lambda}(t)),\dot{x}(t)\rangle\\
			\leq & \vert\dot{\beta}(t)-\frac{\alpha\beta(t)}{3t}\vert(\frac{t}{q}\xi(t)^2\Vert \nabla_x\mathcal{L}_{\rho}(x(t)+\beta(t)\dot{x}(t), \lambda(t)+\frac{3}{2\alpha} t\dot{\lambda}(t))\Vert^2+\frac{q}{4t}\Vert \dot{x}(t)\Vert^2),
		\end{aligned}
	\end{equation*}
	and
	\begin{equation*}
		\begin{aligned}
			&-\xi(t)^2\beta(t)\epsilon(t)\langle \nabla_x\mathcal{L}_{\rho}(x(t)+\beta(t)\dot{x}(t), \lambda(t)+\frac{3}{2\alpha} t\dot{\lambda}(t)), x(t)\rangle\\
			\leq & \frac{|\beta(t)|\xi(t)^2}{M}\Vert \nabla_x\mathcal{L}_{\rho}(x(t)+\beta(t)\dot{x}(t), \lambda(t)+\frac{3}{2\alpha} t\dot{\lambda}(t))\Vert^2+\frac{M|\beta(t)|\xi(t)^2\epsilon(t)^2}{4}\Vert x(t)\Vert^2.
		\end{aligned}
	\end{equation*}
	Since according to $\alpha>3$, we know that for the constant $M>1$, there exist constant $q>0$ and $t'_3\ge t_0$ such that for all $t\ge t'_3$,
	\begin{equation*}
		\frac{\vert \beta(t)\vert}{M}-\beta(t)+\frac{t}{q}\vert\dot{\beta}(t)-\frac{\alpha\beta(t)}{3t}\vert \leq 0,
	\end{equation*}
	and there exists $t''_3\ge t_0$ such that for all $t\ge t''_3$,
	\begin{equation*}
		\frac{3-\alpha}{3t}+\frac{q}{4t}\vert\dot{\beta}(t)-\frac{\alpha\beta(t)}{3t}\vert\leq 0.
	\end{equation*}
	Moreover, combining with the assumptions \eqref{equ15} and \eqref{equ16_3}, there exists $t'''_3\ge t_0$ such that for all $t\ge t'''_3$,
	\begin{equation*}
		\xi(t)\frac{\dot{\epsilon}(t)}{2}+\dot{\xi}(t)\frac{\epsilon(t)}{2}+\frac{3-\alpha}{3t}\xi(t)\epsilon(t)+\frac{M\vert\beta(t)\vert\xi(t)^2\epsilon(t)^2}{4}\leq 0.
	\end{equation*}
	Hence by incorporating the above estimates and under assumption \eqref{equ15}, the following result holds: 
	\begin{equation*}
		\frac{2}{t}\widetilde{E}+\dot{\widetilde{E}}(t)\leq\frac{M\vert\beta(t)\vert\xi(t)^2\epsilon(t)^2}{4}\Vert x^*\Vert^2,
	\end{equation*}
	for all $t\ge t_3:=\max\left\{t'_3, t''_3, t'''_3\right\}$.
	By multiplying both sides of the above inequality by $t^2$ and integrating it on $[t_3,t]$ for an arbitrary $t\ge t_3$, and taking into account that $\beta(t)=\gamma+\frac{\beta}{t}$ is bounded, we obtain that there exists a constant $C_6>0$ such that
	\begin{equation*}
		\widetilde{E}(t)\leq \frac{t_3\widetilde{E}(t_3)}{t^2}+\frac{C_6\Vert x^*\Vert^2}{4t^2}\int_{t_3}^{t}s^2\xi(s)^2\epsilon(s)^2ds.
	\end{equation*}
	Moreover, by the $\epsilon(t)$- strong convexity of $\mathcal{L}_{\epsilon(t)}$  and together with \eqref{equ41}, we deduce that 
	\begin{equation*}
		\mathcal{L}_{\epsilon(t)}(x(t))-\mathcal{L}_{\epsilon(t)}(x_{\epsilon(t)})\ge\frac{\epsilon(t)}{2}\Vert x(t)-x_{\epsilon(t)}\Vert^2.
	\end{equation*}
	Using the definition of $\mathcal{L}_{\epsilon}(x)$ and the fact that $ \mathcal{L}_{\rho}(x^*,\lambda^*)\leq\mathcal{L}_{\rho}(x_{\epsilon(t)}, \lambda^*)$, we derive that
	\begin{equation*}
		\begin{aligned}
			\mathcal{L}_{\epsilon(t)}(x^*)-\mathcal{L}_{\epsilon(t)}(x_{\epsilon(t)})
			=&\mathcal{L}_{\rho}(x^*,\lambda^*)-\mathcal{L}_{\rho}(x_{\epsilon(t)},\lambda^*)+\frac{\epsilon(t)}{2}\Vert x^*\Vert^2-\frac{\epsilon(t)}{2}\Vert x_{\epsilon(t)}\Vert^2
			\leq\frac{\epsilon(t)}{2}\Vert x^*\Vert^2-\frac{\epsilon(t)}{2}\Vert x_{\epsilon(t)}\Vert^2.
		\end{aligned}
	\end{equation*}
	As a consequence,
	\begin{equation}\label{equ28}
		\frac{\epsilon(t)}{2}(\Vert x(t)-x_{\epsilon(t)}\Vert^2+\Vert x_{\epsilon(t)}\Vert^2-\Vert x^*\Vert^2)\leq\mathcal{L}_{\epsilon(t)}(x(t))-\mathcal{L}_{\epsilon(t)}(x^*).
	\end{equation}%
	According to the definition of $\widetilde{E}$ and $\Vert x(t)\Vert \ge \Vert x^*\Vert$, we obtain that
	\begin{equation*}
		\begin{aligned}
			\widetilde{E}(t)\ge & \frac{1}{2}\xi(t)\epsilon(t)(\Vert x(t)+\beta(t)\dot{x}(t)-x_{\epsilon(t)}\Vert^2+\Vert x_{\epsilon(t)}\Vert^2-\Vert x^*\Vert^2+\Vert x(t)\Vert^2-\Vert x(t)+\beta(t)\dot{x}(t)\Vert^2)\\
			=& \frac{1}{2}\xi(t)\epsilon(t)(\Vert x(t)-x_{\epsilon(t)}\Vert^2+\Vert x_{\epsilon(t)}\Vert^2-\Vert x^*\Vert^2)
			-\xi(t)\beta(t)\epsilon(t)\langle \dot{x}(t), x_{\epsilon(t)}\rangle .
		\end{aligned}
	\end{equation*}
	Then,
	\begin{equation*}
		\begin{aligned}
			\Vert x(t)-x_{\epsilon(t)}\Vert^2\leq & \Vert x^*\Vert^2-\Vert x_{\epsilon(t)}\Vert^2+2\beta(t)\langle \dot{x}(t), x_{\epsilon(t)}\rangle 
			+\frac{2t_3\widetilde{E}(t_3)}{t^2\xi(t)\epsilon(t)}+\frac{C_6\Vert x^*\Vert^2}{2t^2\xi(t)\epsilon(t)}\int_{t_3}^{t}s^2\xi(s)^2\epsilon(s)^2ds.
		\end{aligned}
	\end{equation*}
	From Theorem \ref{thm3.1_3}, we have $\dot{x}(t)\rightarrow 0$ as $t\rightarrow +\infty$. Since $x_{\epsilon(t)}\rightarrow x^*$, we get that $2\beta(t)\langle \dot{x}(t), x_{\epsilon(t)}\rangle \rightarrow 0$ as $t\rightarrow +\infty$. And since 
	\begin{equation*}
		t^2\xi(t)\epsilon(t)\rightarrow +\infty \text{ and }  \frac{1}{t^2\xi(t)\epsilon(t)} \int_{t_3}^{t} s^2\xi(s)^2\epsilon(s)^2 ds \rightarrow 0 \text{ as } t\rightarrow +\infty,
	\end{equation*}
	and 
	$\lim\limits_{t\rightarrow +\infty}	\Vert x(t)-x_{\epsilon(t)}\Vert=0$,
	we have
	\begin{equation*}
		\lim\limits_{t\rightarrow +\infty}	\Vert x(t)-x^*\Vert=0. 
	\end{equation*}
	
	\textbf{Case II:} Suppose that there exists a large enough T such that $x(t) \in B(0,\Vert x^*\Vert)\;\text{for all }t\ge T$, i.e.,
	\begin{equation}\label{equ29}
		\Vert x(t)\Vert<\Vert x^*\Vert,\;\text{for all }t\ge T.
	\end{equation}
	This implies the existence of a weak cluster point for the trajectory $x(t)$.
	
	Let $\tilde{x}$ be a weak cluster point of trajectory $x(t)$. According to the definition of weak convergence,
	there exists a sequence$(t_n)_{n\in\mathbb{N}}$ with $t_n\rightarrow +\infty$ such that $x(t_n) \rightharpoonup \tilde{x}$ as $n \rightarrow +\infty$. Given that $f$ is continuous, the norm is weakly lower semi-continuous and $A$ is a continuous linear operator, it follows that $L_{\rho}(\cdot, \lambda^*)$ is weakly lower semi-continuous. Consequently,
	\begin{equation}\label{equ31}
		\mathcal{L}_{\rho}(\tilde{x},  \lambda^*)\leq \liminf_{n\rightarrow+\infty}	\mathcal{L}_{\rho}(x(t_n),  \lambda^*).
	\end{equation}
	
	Next, we need to define a new energy function $\widehat{E}:\left[t_0,+\infty\right)\rightarrow\left[0,+\infty\right)$ by
	\begin{equation*}
		\begin{aligned}
			\widehat{E}(t)=\frac{E_{\lambda^*}^{\epsilon}(t)}{t^2}.
		\end{aligned}
	\end{equation*}
	A similar argument to the proof of \textbf{Case I} shows that
	\begin{equation*}
		\frac{2}{t}\widehat{E}+\dot{\widehat{E}}(t)\leq \frac{\alpha}{3t}\xi(t)\epsilon(t)\Vert x^*\Vert^2.
	\end{equation*}
	Multiplying both sides of the above inequality by $t^2$ and integrating it form $t_3$ to $t$ result in
	\begin{equation*}
		\widehat{E}(t)\leq \frac{\widehat{E}(t_3)}{t^2} + \frac{\alpha\Vert x^*\Vert^2}{3t^2}\int_{t_3}^{t}s\xi(s)\epsilon(s)ds.
	\end{equation*}
	Under the assumption $\int_{t_0}^{+\infty} \frac{\xi(t)\epsilon(t)}{t}<+\infty$ , applying Lemma \ref{lemB.2}  to above inequality, we obtain
	\begin{equation}
		\lim\limits_{t\rightarrow +\infty}\widehat{E}(t)=0.
	\end{equation}
	Then, combining the definition of $\widehat{E}$ with $\liminf_{t\rightarrow+\infty}\xi(t)\neq 0$ yields
	\begin{equation*}
		\lim\limits_{t\rightarrow+\infty}\mathcal{L}_{\rho}(x(t)+\beta(t)\dot{x}(t),\lambda^*)-\mathcal{L}_{\rho}(x^*,  \lambda^*)=0,
	\end{equation*}
	which together with the conclusion $\lim\limits_{t\rightarrow+\infty}(\dot{x}(t), \dot{\lambda}(t))=\mathbf{0}$ given in Theorem \ref{thm3.1_3} and  \eqref{equ31} leads to
	\begin{equation*}
		\mathcal{L}_{\rho}(\tilde{x}, \lambda^*)\leq\mathcal{L}_{\rho}(x^*,\lambda^*).
	\end{equation*}
	Considering $(x^*,\lambda^*)\in\Omega$, one has
	$\mathcal{L}_{\rho}(x^*, \lambda^*)\leq\mathcal{L}_{\rho}(\tilde{x},  \lambda^*)\leq\mathcal{L}_{\rho}(x^*,  \lambda^*)$.
	Therefore,
	\begin{equation*}
		\mathcal{L}_{\rho}(\tilde{x}, \lambda^*)=\mathcal{L}_{\rho}(x^*,  \lambda^*)=\min_{x\in\mathcal{X}}\mathcal{L}_{\rho}(x,  \lambda^*),
	\end{equation*}
	which implies
	\begin{equation*}
		\tilde{x}\in{\arg}\min_{x\in\mathcal{X}}\mathcal{L}_{\rho}(x, \lambda^*)\subseteq S.
	\end{equation*}
	By using \eqref{equ29}, we get
	\begin{equation*}
		\limsup_{n\rightarrow+\infty}\Vert x(t_n)\Vert\leq\Vert x^*\Vert.
	\end{equation*}
	On the other hand, since the norm is weakly lower semi-continuous and the trajectory $x(\cdot)$ converges weakly to $\tilde{x}$, then
	\begin{equation*}
		\Vert\tilde{x}\Vert\leq\liminf_{n\rightarrow+\infty}\Vert  x(t_n)\Vert\leq\limsup_{n\rightarrow+\infty}\Vert  x(t_n)\Vert\leq\Vert x^*\Vert.
	\end{equation*}
	Combining $\tilde{x}\in S$ and the definition of $x^*$,  it is concluded that $\tilde{x}=x^*$. Therefore,
	\begin{equation*}
		x(t)\rightharpoonup x^*,\quad {\rm{as}}\quad t\rightarrow+\infty,
	\end{equation*}
	and
	\begin{equation*}
		\Vert x^*\Vert\leq\liminf_{t\rightarrow+\infty}\Vert x(t)\Vert\leq\limsup_{t\rightarrow+\infty}\Vert x(t)\Vert\leq\Vert  x^*\Vert.
	\end{equation*}
	Thus, the strong convergence of the trajectory in this case is established, that is,
	\begin{equation*}
		\lim\limits_{t\rightarrow+\infty}\Vert x(t)-x^*\Vert =0.
	\end{equation*}
	
	\textbf{Case III:} For any $T\ge t_0$, there exist separately $t\ge T$ and $s \ge T$ such that $x(t)\in B(0,\Vert x^*\Vert)$ or $x(s)$ in the complement of the ball $B(0,\Vert x^*\Vert)$. As a consequence, there exists a sequence $(t_n)_{n\in \mathbb{N}}$ such that $t_n\rightarrow+\infty$ is as $n\rightarrow+\infty$ and $\Vert  x(t_n)\Vert =\Vert x^*\Vert \text{ for all } n\in \mathbb{N}$.
	Obviously,
	\begin{equation}\label{equ32}
		\lim\limits_{n\rightarrow+\infty}\Vert x(t_n)\Vert =\Vert x^*\Vert.
	\end{equation}
	Similarly to the proof in Case II, we obtain that $x^*$ is a unique weak cluster point such that $x(t_n)\rightharpoonup x^*,\, {\rm{as}}\, n\rightarrow+\infty$.
	Combining \eqref{equ32} results in
	\begin{equation*}
		\liminf_{t\rightarrow+\infty}\Vert x(t)-x^*\Vert =0.
	\end{equation*}
	This completes the proof of Theorem \ref{thm3.3}.
\end{proof}

\begin{remark}
	In \cite{bib20}, He et al. only established the convergence rates for a mixed primal-dual dynamical system with \emph{explicit} Hessian-driven damping, and were not able to guarantee the strong convergence of the trajectory. However, Theorem \ref{thm3.1_3} and Theorem \ref{thm3.3} show that our dynamical system \eqref{equ2_1} not only achieves the same fast convergence rates as the dynamical system proposed by He et al. \cite{bib20}, but also ensures strong convergence of the trajectory.
\end{remark}
\begin{remark}
	When $\gamma=0$ and $\beta=0$, the dynamical system \eqref{equ2_1} reduces to the one proposed by Zhu et al. in \cite{bib23} with $\theta=\frac{3}{2\alpha}$. Therefore, our dynamical system \eqref{equ2_1} can be viewed as an extension of the dynamical system introduced in \cite{bib23}, incorporating an implicit Hessian damping term. Moreover, in the case where $A=0$ and $b=0$, the dynamical system \eqref{equ2_1} reduces to the dynamical system proposed by Alecsa et al. in \cite{bib37}. The results in Theorem \ref{thm3.1_3} and Theorem \ref{thm3.3} then imply some of the convergence results established in \cite[Theorem 2.1 and Theorem 3.2]{bib37}. However, the convergence rate $o(\frac{1}{t^2})$ of the objective residual obtained in \cite{bib37} cannot be directly derived from Theorem \ref{thm3.1_3}. This is also a direction for future research.
\end{remark}

%
\section{Numerical experiments}\label{sec5}
In this section, to validate the theoretical results of the primal-dual dynamical system with implicit Hessian damping \eqref{equ2} (PDDS-IHD for short) and the primal-dual dynamical system with implicit Hessian damping and Tikhonov regularization \eqref{equ2_1} (PDDS-IHDTR for short), we provide numerical experiments. All experiments were carried out on a MacBook Pro (Apple M2, 16GB memory) using MATLAB R2021b.
%
we consider the convex optimization problem:
\begin{equation}\label{equ52}
	\begin{aligned}
		&\min_{x, y} \; (mx_1+nx_2+ex_3)^2,\\
		&\;{\rm{s.t.}} \; Ax =b,
	\end{aligned}
\end{equation}
where $A= (m, -n, e)$ with $m, n, e\in\mathbb{R}\backslash \left\{0\right\}$ and $b=0$. A direct computation shows that the set of solutions for this problem is given by $S=\left\{(x_1, 0, -\frac{m}{e}x_1)^T\vert\; x_1\in\mathbb{R}\right\}$. Clearly, the minimum-norm solution is $x^*=(0,0,0)^T$, and the corresponding optimal value of the objective function is zero.

First, we conduct two numerical experiments to illustrate the roles of the Tikhonov regularization term $\epsilon(t)x(t)$ and the term $(\gamma+\frac{\beta}{t})\dot{x}(t)$, respectively. In the first experiment, a comparison is made between the dynamical system (PDDS-IHDTR)\eqref{equ2_1}, the dynamical system (PDDS-IHD)\eqref{equ2}, and the dynamical system (PD-AVD) proposed by Bo{\c{t}} et al. \cite{bib16} that does not contain the Tikhonov regularization term $\epsilon(t)x(t)$ and the term $(\gamma+\frac{\beta}{t})\dot{x}(t)$. All dynamical systems are numerically solved with the MATLAB built-in solver ode23 in the interval $[0,50]$.

For  the dynamical system (PDDS-IHDTR) \eqref{equ2_1}, the parameters are set as follows: $m = 5$, $n = 10$, $e = 6$, $\alpha = 3.1$, $\gamma=1$, $\beta =-0.5$, $\rho=1$, $\xi(t) = 1$, $\epsilon(t) = \frac{1}{t^r}$, $r=1.5$, with initial conditions: $x(1)= (1, -1, -1)^T$,  $\lambda(1) = 1$, $\dot{x}(1) = (-1, 1, 1)^T$, $\dot{\lambda}(1) = -1$. For  the dynamical system (PDDS-IHD) \eqref{equ2}, the parameters remain mostly unchanged except that the Tikhonov coefficient is omitted and the same initial conditions are used. For the dynamical system (PD-AVD), we set: $m = 5$, $n = 10$, $e = 6$, $\alpha = 3.1$, $\beta=1$, $\delta=\frac{1}{2.05}$, with identical initial conditions as above.

As shown in \cref{fig: 1_convergence}(a), the trajectory $x(t)$ generated by the dynamical system converges to the minimum-norm solution $x^*$, confirming the theoretical results. In contrast, when the Tikhonov regularization term $\epsilon(t)x(t)$ is removed from the dynamical system \eqref{equ2_1}, such convergence fails to occur, as illustrated in \cref{fig: 1_convergence}(b). Similarly, for the dynamical system (PD-AVD) without the Tikhonov regularization term, the generated trajectory also fails to converge to the minimum-norm solution of the problem \eqref{equ52}. This highlights the crucial role played by the Tikhonov regularization in ensuring strong convergence to the minimum-norm solution.

\begin{figure}[htbp]
	\centering
	\subfigure[dynamical system (PDDS-IHDTR)]
	{
		\begin{minipage}[t]{0.32\linewidth}
			\centering
			\includegraphics[width=2in]{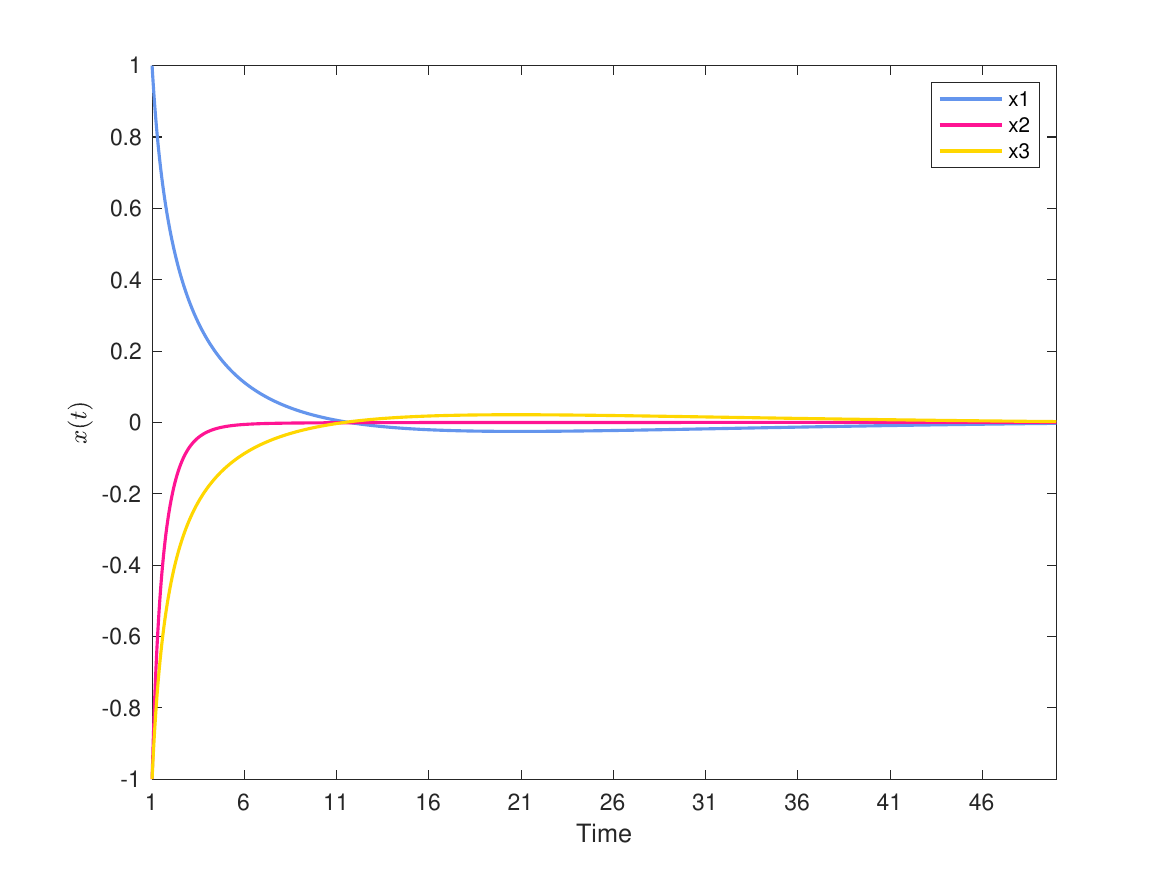}
		\end{minipage}%
	}
	\subfigure[dynamical system (PDDS-IHD)]
	{
		\begin{minipage}[t]{0.32\linewidth}
			\centering
			\includegraphics[width=2in]{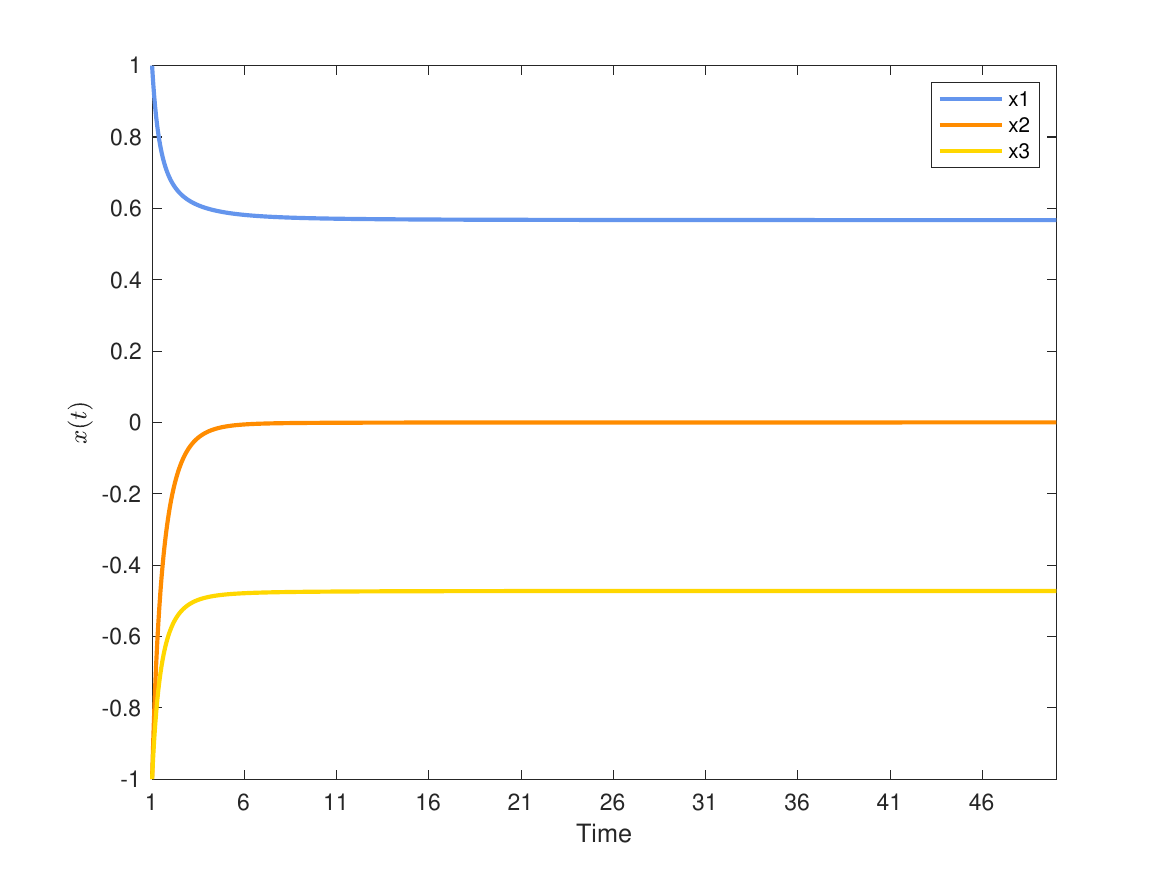}
		\end{minipage}%
	}
	\subfigure[dynamical system (PD-AVD)]
	{
		\begin{minipage}[t]{0.32\linewidth}
			\centering
			\includegraphics[width=2in]{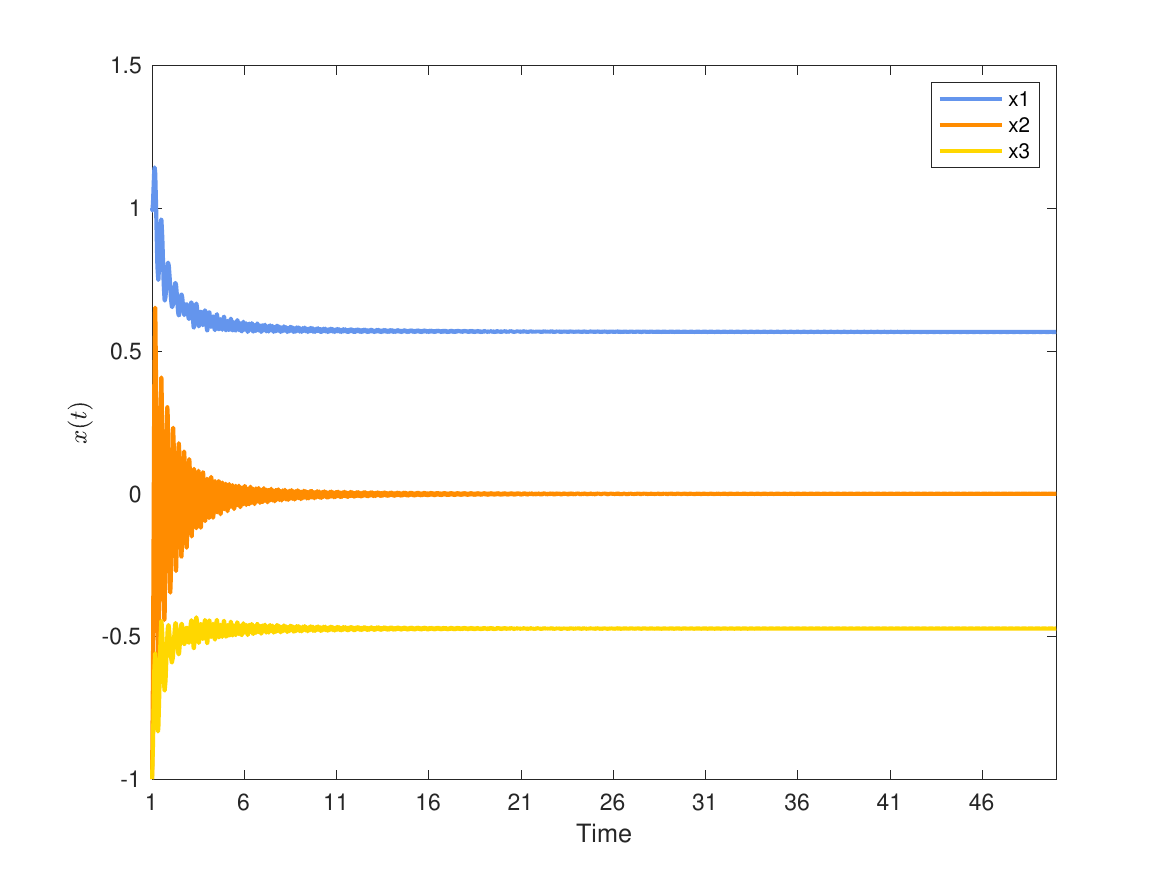}
		\end{minipage}%
	}
	\caption{The behaviors of the trajectories $x(t)$ generated by (PDDS-IHDTR), (PDDS-IHD) and (PD-AVD) for the problem \eqref{equ52}}
	\label{fig: 1_convergence}
	\centering
\end{figure}
\vspace{-5mm}
In the second experiment, we conduct a comparative study among five dynamical systems: (i) the  dynamical system (PDDS-IHDTR) \eqref{equ2_1}, (ii) the dynamical system \eqref{equ2_1} without the term $(\gamma+\frac{\beta}{t})\dot{x}(t)$, (iii) the dynamical system (PDDS-IHD) \eqref{equ2}, (iv) the dynamical system \eqref{equ2} without the term $(\gamma+\frac{\beta}{t})\dot{x}(t)$, and (v) the dynamical system (PD-AVD). For the dynamical system \eqref{equ2_1}, we consider two parameter configurations. In the first case, we set $m = 5$, $n = 10$, $e = 6$, $\alpha = 3.1$, $\gamma=1$, $\beta =-0.5$, $\rho=1$, $\xi(t) = 1$, $\epsilon(t) = \frac{1}{t^r}$ with $r=1.5$. In the second case, corresponding to the dynamical system without the term $(\gamma+\frac{\beta}{t})\dot{x}(t)$,  all parameters remain the same except that $\gamma $ and $\beta$ are omitted from the model. For the dynamical system (PDDS-IHD) \eqref{equ2}, we consider two parameter configurations, each corresponding to one of the two configurations used for the dynamical system (PDDS-IHDTR) \eqref{equ2_1}, with the sole difference that $\epsilon$ is removed from the model. For the (PD-AVD), we take $m = 5$, $n = 10$, $e = 6$, $\alpha = 3.1$, $\beta=1$, $\delta=\frac{1}{2.05}$. The initial conditions for the five dynamical system are given by $ (x(1),\lambda(1), \dot{x}(1), \dot{\lambda}(1))^T=\mathbf{1}^{8\times 1}$.

As shown in \cref{fig: 2_contrast}(a),  \cref{fig: 2_contrast}(b), and  \cref{fig: 2_contrast}(c), which depict the evolution of the primal-dual gap, feasibility violation, and iteration error along the trajectories generated by the five dynamical systems, respectively, the  perturbation term $(\gamma+\frac{\beta}{t})\dot{x}(t)$ in the argument of the gradient of the augmented Lagrange function $\mathcal{L}_{\rho}$ eliminates the oscillations. Specifically, the inclusion of this term results in smoother convergence trajectory and a notable reduction in fluctuations. Moreover, with respect to the primal-dual gap and the constraint violation, \cref{fig: 2_contrast}(a) and \cref{fig: 2_contrast}(b) demonstrate that dynamical systems (PDDS-IHDTR) \eqref{equ2_1} and (PDDS-IHD) \eqref{equ2} display the same decay behavior. Compared to the dynamical system  (PD-AVD), which does not include the perturbation term $(\gamma+\frac{\beta}{t})\dot{x}(t)$  and the Tikhonov regularization term $\epsilon(t)x(t)$, our proposed dynamical system \eqref{equ2_1} not only eliminates oscillations but also achieves better convergence accuracy, as shown in \cref{fig: 2_contrast}(c). The improved convergence accuracy is closely related to the ability of the Tikhonov regularization term $\epsilon(t)x(t)$ to ensure strong convergence of the trajectories.

\begin{figure}[htbp]
	\centering
	\subfigure[primal-dual gap]
	{
		\begin{minipage}[t]{0.32\linewidth}
			\centering
			\includegraphics[width=2in]{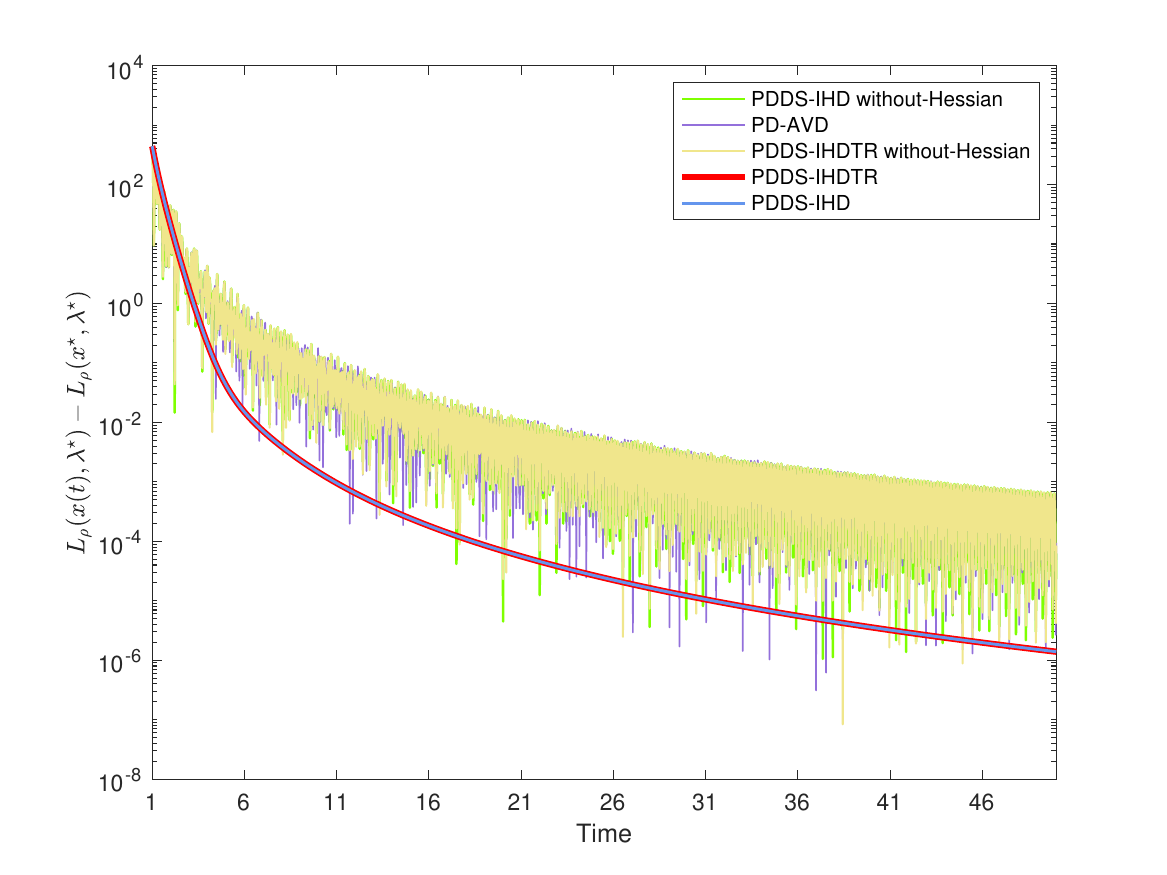}
		\end{minipage}
	}
	\subfigure[feasibility violation]
	{
		\begin{minipage}[t]{0.32\linewidth}
			\centering
			\includegraphics[width=2in]{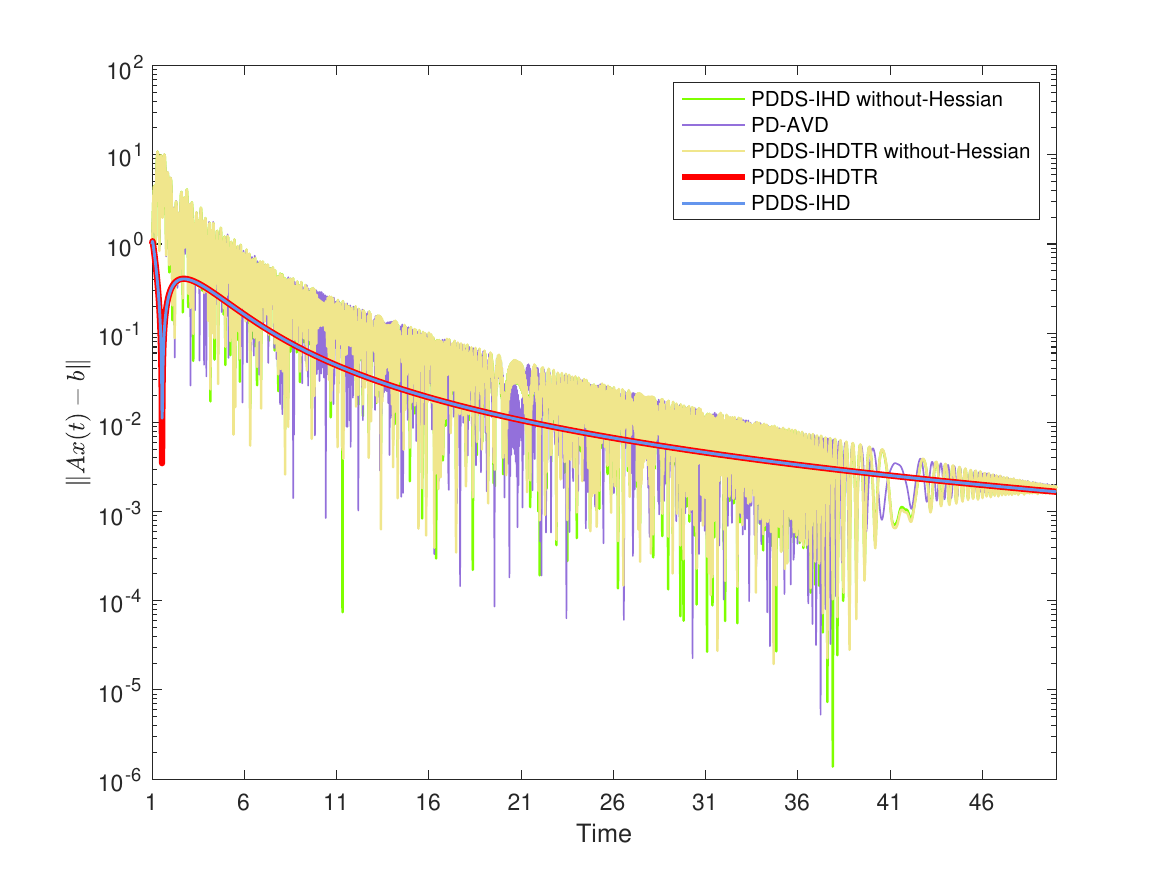}
		\end{minipage}%
	}
	\subfigure[iterate error]
	{
		\begin{minipage}[t]{0.32\linewidth}
			\centering
			\includegraphics[width=2in]{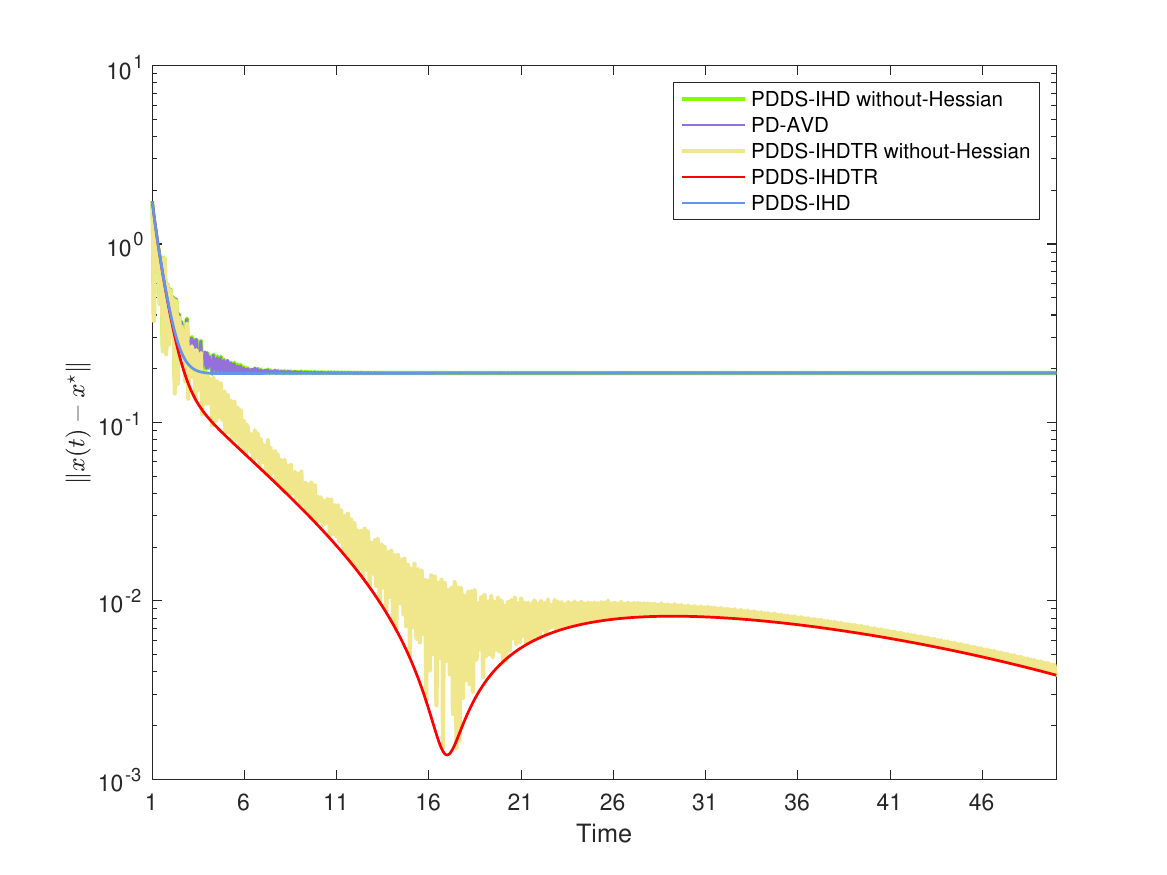}
		\end{minipage}%
	}
	\caption{Comparison of error results between (PD-AVD), (PDDS-IHDTR) without Hessian, (PDDS-IHDTR), (PDDS-IHD) without Hessian and  (PDDS-IHD)}
	\label{fig: 2_contrast}
	\centering
\end{figure}
Next, we conduct a third numerical experiment. In this experiment, we examine how the iteration error, constraint violation, and primal-dual gap evolve along the trajectory generated by the dynamical system \eqref{equ2_1} under varying values of $\gamma$ and $\beta$. The system is configured with the following parameter values: $m = 0.1$, $n = 20$, $e = 50$ $\alpha = 3.1$, $\rho=1$, $\xi(t) = 1$, $\epsilon(t) = \frac{1}{t^r}$, $r=1.1$, and initial conditions: $x(1)= (1, -1, -1)^T$, $\lambda(1) = 1$, $\dot{x}(1) = (-1, 1, 1)^T$, $\dot{\lambda}(1) = 1$.

As shown in \cref{fig: 3_contrast}, in terms of the initial decay rates of the three types of errors, the best parameter choice appears to be $\gamma=0$, $\beta>0$. However, in terms of the final accuracy, the optimal parameters seem to be $\gamma>0$, $\beta<0$. Moreover, the numerical results for this specific problem clearly indicate that the case $\gamma=0$, $\beta=0$, which corresponds to the absence of the Tikhonov regularization term $\epsilon(t)x(t)$, yields the worst performance among all parameter choices. This suggests that the combination of a positive $\gamma$ and a negative $\beta$ plays a crucial role in enhancing the asymptotic convergence behavior. On the other hand, while a positive $\beta$ alone may accelerate the early-stage convergence, it is insufficient for achieving high-precision solutions in the long run.
\begin{figure}[htbp]
	\centering
	\subfigure[iterate error]
	{
		\begin{minipage}[t]{0.32\linewidth}
			\centering
			\includegraphics[width=2in]{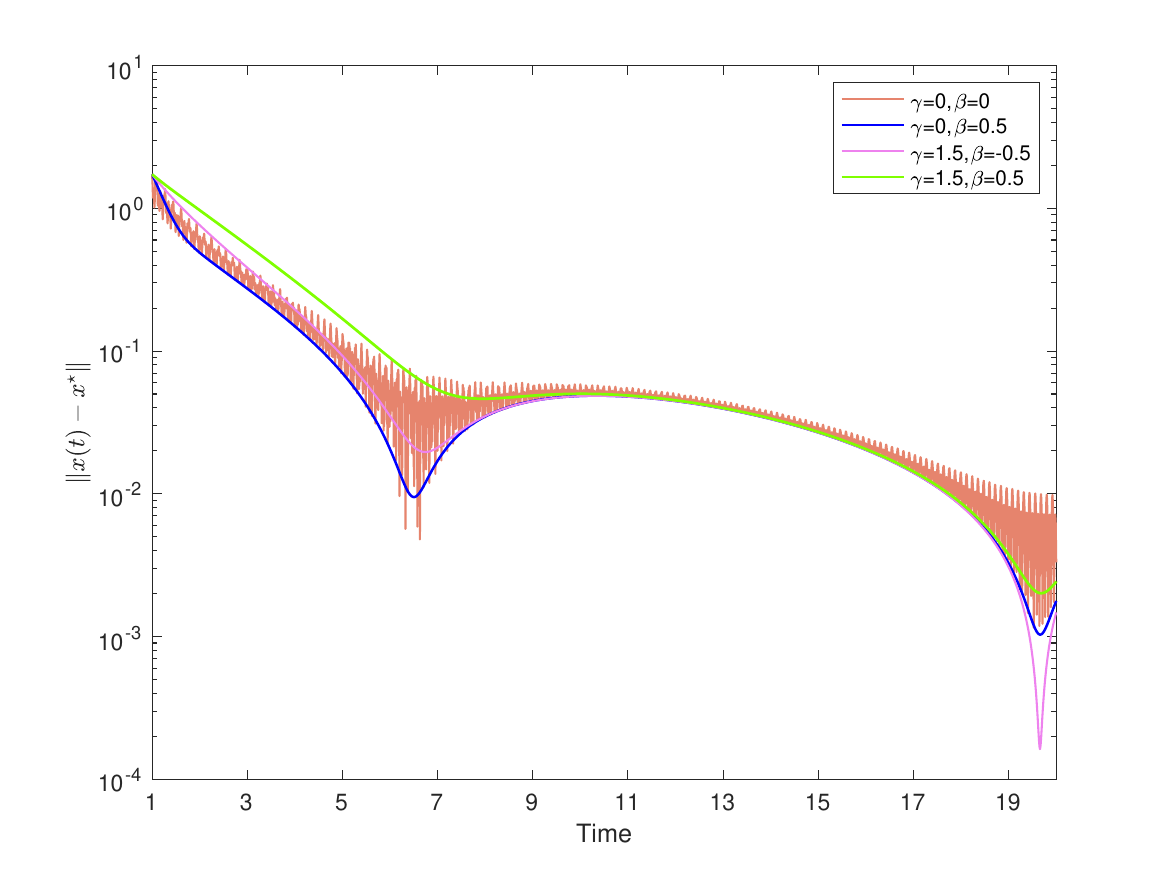}
		\end{minipage}
	}
	\subfigure[feasibility violation]
	{
		\begin{minipage}[t]{0.32\linewidth}
			\centering
			\includegraphics[width=2in]{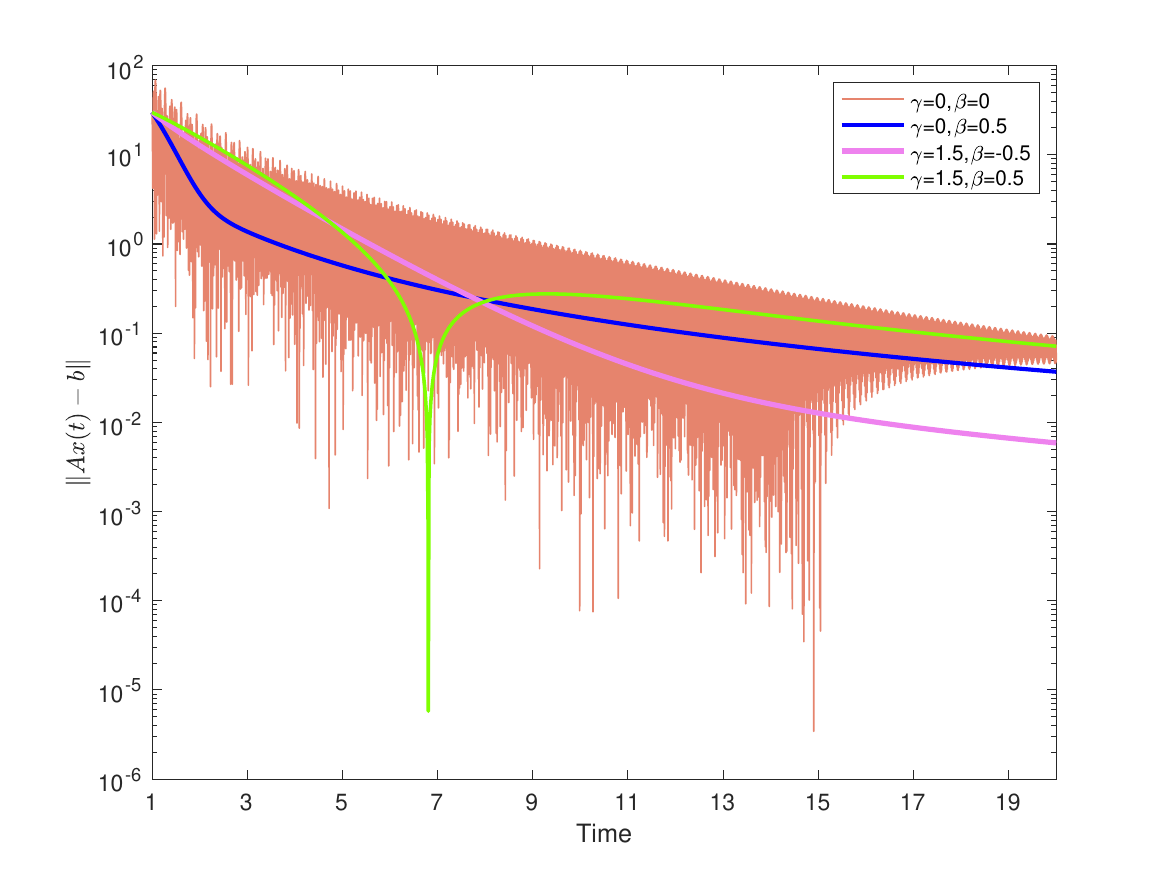}
		\end{minipage}%
	}
	\subfigure[primal-dual gap]
	{
		\begin{minipage}[t]{0.32\linewidth}
			\centering
			\includegraphics[width=2in]{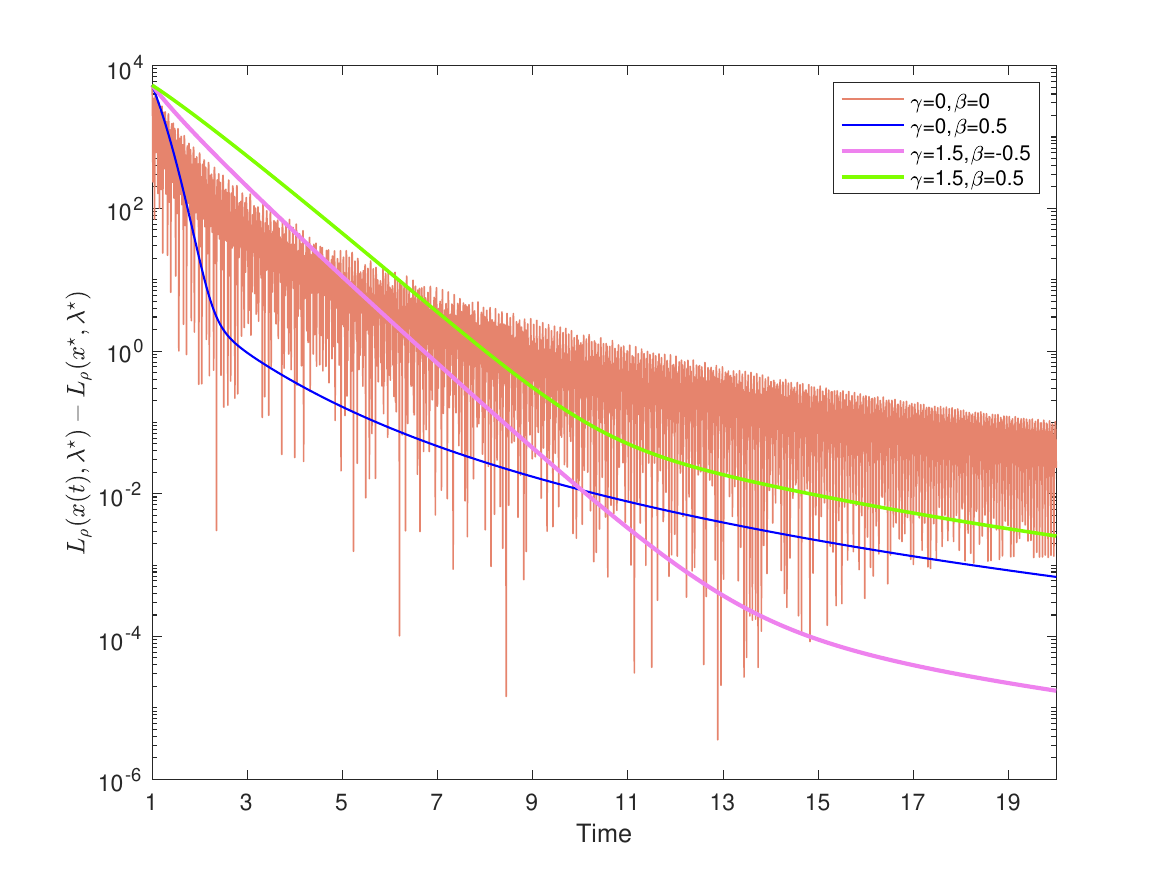}
		\end{minipage}%
	}
	\caption{Error analysis with different parameters in the dynamical system (PDDS-IHDTR) for the problem \eqref{equ52}}
	\label{fig: 3_contrast}
	\centering
\end{figure}

\section{Conclusion}\label{sec6}
In this paper, we propose two primal-dual dynamical systems \eqref{equ2} and \eqref{equ2_1}, which achieve smooth effects comparable to those of systems involving explicit Hessian-driven damping terms $\nabla^2f(x(t))\dot{x}(t)$, while only requiring the objective function to be continuously differentiable. 
Furthermore,  for both dynamical systems, we establish the convergence rate $\mathcal{O}(\frac{1}{t^2\xi(t)})$ for the error $\mathcal{L}_{\rho}(x(t)+(\gamma+\frac{\beta}{t})\dot{x}(t),\lambda(t))-\mathcal{L}_{\rho}(x^*,\lambda^*)$ and the convergence rate $\mathcal{O}(\frac{1}{t})$ for the velocity $\dot{x}(t)$ under the sole assumption that the objective function is convex.
Furthermore, by incorporating a Tikhonov regularization term into the dynamical system \eqref{equ2}, we introduce the dynamical system \eqref{equ2_1} and prove that its trajectory converges strongly to the minimum-norm solution of the problem \eqref{equ1}. Finally, we present numerical experiments to illustrate the theoretical findings.

	\appendix

\section*{CRediT authorship contribution statement}
{\bf Hong-lu Li:} Conceptualization, Software, Visualization, Writing-original draft.
{\bf Xin He:} Methodology, Investigation,  Funding acquisition, Writing-review \& editing. 
{\bf Yi-bin Xiao:} Conceptualization, Supervision, Funding acquisition, Writing-review \& editing.

\section*{Declaration of competing interest}

The authors declare that they have no known competing financial interests or personal relationships that could have appeared
to influence the work reported in this paper.

\section*{Data availability}
No data was used for the research described in the article.

\section*{Acknowledgments}
This research was supported by the National Natural Science Foundation of China (12171070) and Sichuan Science and Technology Program (Grant No. 2025ZNSFSC0813).


\begin{appendices}
	
	\section{Well-posedness of the Cauchy problem}\label{Appendix A}
	
	In this appendix, we will give the proof of the existence and uniqueness of a global solution of dynamical system \eqref{equ2} and \eqref{equ2_1} with the initial condition $(x(t_0),\lambda(t_0))=(x_0,\lambda_0)$, $(\dot{x}(t_0), \dot{\lambda}(t_0))=(u_0, w_0)$. Note that when $\epsilon =0$, the dynamical system \eqref{equ2_1} reduces to the dynamical system \eqref{equ2}. Therefore, in what follows, we focus on proving the existence and uniqueness of a global solution for the dynamical system \eqref{equ2_1}. The corresponding result for the dynamical system \eqref{equ2} will then follow by setting $\epsilon =0$ in the analysis.
	\vskip 2mm
	\begin{definition}
		A function $x : [t_0, +\infty)\rightarrow\mathcal{X}$ is a strong global solution of the dynamical system \eqref{equ2_1}, if it satisfies the following properties:
		\begin{enumerate}
			\item[(i)] $x : [t_0, +\infty)\rightarrow\mathcal{X}$ and $\lambda : [t_0, +\infty)\rightarrow\mathcal{Y}$ are locally absolutely continuous;
			\item[(ii)] 		
			$
			\begin{cases}
				\begin{aligned}
					&\ddot{x}(t) + \frac{\alpha}{t} \dot{x}(t) + \xi(t) (\nabla_x\mathcal{L}_{\rho}(x(t)+\beta(t)\dot{x}(t),\lambda(t)+\frac{3}{2\alpha} t\dot{\lambda}(t)) + \epsilon(t) x(t)) = 0, \\
					&\ddot{\lambda}(t) + \frac{\alpha}{t} \dot{\lambda}(t) - \xi(t) (\nabla_{\lambda}\mathcal{L}_{\rho}(x(t)+\frac{3}{2\alpha} t\dot{x}(t),\lambda(t))+\frac{1}{2\alpha}(-\alpha\beta(t)+3 t\dot{\beta}(t))A\dot{x}(t)\\
					&\qquad-\frac{3}{2\alpha} t\xi(t)\beta(t)(A\nabla_x\mathcal{L}_{\rho}(x(t)+\beta(t)\dot{x}(t), \lambda(t)+\frac{3}{2\alpha} t\dot{\lambda}(t))+\epsilon(t)Ax(t)))= 0,
				\end{aligned} \quad\forall t\in[t_0, +\infty)
			\end{cases} 
			$
			\item[(iii)] $(x(t_0), \lambda(t_0))=(x_0, \lambda_0)\in \mathcal{X}\times\mathcal{Y}$, $(\dot{x}(t_0), \dot{\lambda}(t_0))=(u_0,w_0)\in \mathcal{X}\times\mathcal{Y}$.
		\end{enumerate}
		\vskip 2mm
	\end{definition}
	\begin{theorem}\label{thm1}
		Suppose that $f$ is a continuous function such that $\nabla f$ is Lipschitz continuous with Lipschitz constant $L_f>0$. Let $\epsilon, \xi:\left[t_0,+\infty\right)\rightarrow\left(0,+\infty\right)$ be continuous functions. Then, for any initial condition $(x(t_0), \lambda(t_0))=(x_0, \lambda_0)\in \mathcal{X}\times\mathcal{Y}$, $(\dot{x}(t_0), \dot{\lambda}(t_0))=(u_0, w_0)\in \mathcal{X}\times\mathcal{Y}$ with $t_0>0$, the dynamical system \eqref{equ2_1} has a unique strong global solution.
	\end{theorem}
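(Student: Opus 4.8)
The plan is to recast the second-order system \eqref{equ2_1} as an autonomous-in-structure first-order system on the product Hilbert space $\mathcal{H}:=\mathcal{X}\times\mathcal{Y}\times\mathcal{X}\times\mathcal{Y}$, apply the Cauchy--Lipschitz--Picard theorem to obtain a unique maximal solution, and then use a Gronwall-type a priori bound to show the maximal interval is all of $[t_0,+\infty)$. Concretely, I would set $z(t)=(x(t),\lambda(t),\dot x(t),\dot\lambda(t))$ and rewrite \eqref{equ2_1} as $\dot z(t)=\mathcal{F}(t,z(t))$, where, for $z=(x,\lambda,u,w)$, the map $\mathcal{F}(t,z)$ has first two components $(u,w)$ and last two components obtained by solving \eqref{equ2_1} for $(\ddot x,\ddot\lambda)$. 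The decisive structural observation is that $\hat x(t)=x(t)+\beta(t)\dot x(t)$, $\bar x(t)=x(t)+\tfrac{3}{2\alpha}t\dot x(t)$ and $\bar\lambda(t)=\lambda(t)+\tfrac{3}{2\alpha}t\dot\lambda(t)$ depend only on $x,\lambda$ and their first derivatives, so $\mathcal{F}$ is genuinely explicit in $(\ddot x,\ddot\lambda)$ (the word ``implicit'' in ``implicit Hessian damping'' refers to evaluating $\nabla_x\mathcal{L}_\rho$ at a shifted point, not to any implicitness of the ODE); here $\nabla_x\mathcal{L}_\rho(x,\lambda)=\nabla f(x)+A^{T}\lambda+\rho A^{T}(Ax-b)$ and $\nabla_\lambda\mathcal{L}_\rho(x,\lambda)=Ax-b$.

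Next I would verify the hypotheses of Cauchy--Lipschitz--Picard on every compact subinterval $[t_0,T]$. For fixed $z\in\mathcal{H}$, the map $t\mapsto\mathcal{F}(t,z)$ is continuous since $\xi,\epsilon,\beta,\dot\beta$ and $t\mapsto\alpha/t$ are continuous on $[t_0,+\infty)$ (the assumption $t_0>0$ is exactly what keeps $\alpha/t$ away from its singularity). For the Lipschitz dependence on the state: the only nonlinearity entering $\mathcal{F}$ is $\nabla f$, which is $L_f$-Lipschitz by hypothesis, and the shifted arguments are affine in $z$ with coefficients $\beta(t)$ and $\tfrac{3}{2\alpha}t$ that are bounded on $[t_0,T]$; combining this with the boundedness of the linear operator $A$ and of the scalar multipliers $\xi(t),\epsilon(t)$ on $[t_0,T]$, one expands each component of $\mathcal{F}(t,z_1)-\mathcal{F}(t,z_2)$ and obtains $\|\mathcal{F}(t,z_1)-\mathcal{F}(t,z_2)\|_{\mathcal{H}}\le\ell_T\|z_1-z_2\|_{\mathcal{H}}$ with $\ell_T$ depending only on $L_f$, $\|A\|$, $\alpha$, $t_0$, $T$ and $\sup_{[t_0,T]}(\xi+\epsilon+|\beta|+|\dot\beta|)$. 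This yields a unique maximal strong solution $z(\cdot)$ on some interval $[t_0,T_{\max})$.

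Finally, to show $T_{\max}=+\infty$, I would argue by contradiction. If $T_{\max}<+\infty$, then $[t_0,T_{\max}]$ is compact, so all coefficient functions are bounded there and the Lipschitz estimate holds with one constant $\ell:=\ell_{T_{\max}}$ on the whole of $[t_0,T_{\max})$; moreover $C:=\sup_{t\in[t_0,T_{\max}]}\|\mathcal{F}(t,0)\|_{\mathcal{H}}<+\infty$, because $\mathcal{F}(t,0)$ depends only on the fixed vector $\nabla f(0)-\rho A^{T}b$, on $-b$, and on the bounded coefficients. Hence $\|\dot z(t)\|_{\mathcal{H}}\le\|\mathcal{F}(t,z(t))-\mathcal{F}(t,0)\|_{\mathcal{H}}+\|\mathcal{F}(t,0)\|_{\mathcal{H}}\le\ell\|z(t)\|_{\mathcal{H}}+C$, and integrating from $t_0$ and invoking Gronwall's inequality gives $\|z(t)\|_{\mathcal{H}}\le(\|z(t_0)\|_{\mathcal{H}}+C(T_{\max}-t_0))e^{\ell(T_{\max}-t_0)}$ for all $t\in[t_0,T_{\max})$, together with a corresponding uniform bound on $\|\dot z(t)\|_{\mathcal{H}}$. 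This contradicts the standard blow-up alternative for maximal solutions (namely, $T_{\max}<+\infty$ forces $\limsup_{t\to T_{\max}^-}\|z(t)\|_{\mathcal{H}}=+\infty$), so $T_{\max}=+\infty$ and the global solution is unique; setting $\epsilon\equiv0$ recovers the corresponding statement for \eqref{equ2}. The only step requiring genuine care is the bookkeeping in the Lipschitz estimate for $\mathcal{F}$, i.e.\ tracking how the time-dependent weights $\beta(t)$ and $\tfrac{3}{2\alpha}t$ inside the shifted arguments of $\nabla_x\mathcal{L}_\rho$ and $\nabla_\lambda\mathcal{L}_\rho$ propagate to a clean bound in $\|z_1-z_2\|_{\mathcal{H}}$; everything else is routine once the compact interval $[t_0,T]$ is fixed.
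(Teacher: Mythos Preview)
Your proposal is correct and follows essentially the same approach as the paper: rewrite \eqref{equ2_1} as a first-order system $\dot z=\mathcal{F}(t,z)$ on $\mathcal{X}\times\mathcal{Y}\times\mathcal{X}\times\mathcal{Y}$, verify that $\mathcal{F}$ is continuous in $t$ and globally Lipschitz in $z$ with a locally bounded (hence $L^1_{\mathrm{loc}}$) Lipschitz constant, and conclude global existence and uniqueness. The only cosmetic difference is that the paper packages the global-extension step by citing a ready-made Carath\'eodory-type result (linear growth $\|F(t,Z)\|\le S(t)(1+\|Z\|)$ with $S\in L^1_{\mathrm{loc}}$ implies global existence), whereas you spell out the equivalent Gronwall/blow-up alternative by hand; both routes are standard and lead to the same conclusion.
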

	\begin{proof}
		Denote $Z(t)$ := $(z_1(t),z_2(t),z_3(t),z_4(t))$ = $(x(t), \lambda(t),\dot{x}(t),\dot{\lambda}(t))$, $Z_0$ := $(x_0,\lambda_0,u_0,w_0)$. For the ease of writing, we denote $Z(t)$ as $Z$ := $(z_1,z_2,z_3,z_4)$. Then, the dynamical system \eqref{equ2_1} can be rewritten as: 
		\begin{equation}\label{equA1}
			\begin{cases}
				\frac{dZ}{dt}=F(t,Z),\\
				Z(t_0)=Z_0,
			\end{cases}
		\end{equation}
		where
		\begin{equation*}
			F(t,Z) := \left(
			\begin{aligned}
				&z_3\\
				&z_4\\
				& -\frac{\alpha}{t}z_3-\xi(t)(\nabla_x\mathcal{L}_{\rho}(z_1+\beta(t)z_3, z_2+\frac{3}{2\alpha} tz_4) + \epsilon(t) z_1)\\
				&-\frac{\alpha}{t}z_4+ \xi(t) (\nabla_{\lambda}\mathcal{L}_{\rho}(z_1+\frac{3}{2\alpha} tz_3, z_2)
				+\frac{1}{2\alpha}(-\alpha\beta(t)+3 t\dot{\beta}(t))(Az_4)\\
				&\qquad\quad-\frac{3}{2\alpha} t\xi(t)\beta(t)(A\nabla_x\mathcal{L}_{\rho}(z_1+\beta(t)z_3, z_2+\frac{3}{2\alpha} tz_4)
				+\epsilon(t)Az_1))
			\end{aligned}\right).
		\end{equation*}
		Since $\nabla f$ is Lipschitz continuous and $A$ is linear, it follows that for any $Z$, $\widetilde{Z}\in \mathcal{X}\times\mathcal{Y}\times\mathcal{X}\times\mathcal{Y}$,
		\begin{eqnarray*}
			\Vert F(s,Z(s))-F(s,\widetilde{Z}(s))\Vert&\leq&(M(t):=1+\frac{\alpha}{t}+\xi(t)((1+\frac{3\Vert A\Vert}{2\alpha}t\xi(t))((1+\gamma+\frac{\beta}{t})(L_f+\rho\Vert A\Vert^2)+\epsilon(t))\\
			&&+(1+\frac{1}{2\alpha}(3t-\alpha\gamma-\frac{(\alpha+3)\beta}{t}))\Vert A\Vert\\
			&&+\frac{3}{2\alpha}(1+\frac{3}{2\alpha}t)(\gamma+\frac{\beta}{t})t\xi(t)\Vert A\Vert^2
			))\Vert Z-\widetilde{Z}\Vert.
		\end{eqnarray*}
		Since $\epsilon,\xi:\left[t_0,+\infty\right)\rightarrow\left(0,+\infty\right)$ are continuous functions, we have $M(t)\in L_{loc}^{1}\left[t_0,+\infty\right)$. Further, by the Lipschitz continuity of 
		$\nabla f$ , we deduce that
		\begin{equation*}
			\Vert \nabla f(z_1+\beta(t)z_4)\Vert\leq\Vert\nabla f(0)\Vert+ L_f\Vert z_1+\beta(t)z_4\Vert.
		\end{equation*}
		Therefore, for any given $Z\in\mathcal{X}\times\mathcal{Y}\times\mathcal{X}\times\mathcal{Y}$ and $t_0< T <+\infty$, the following inequality holds:
		\begin{eqnarray*}
			\int_{t_0}^{T}\Vert F(t,Z)\Vert dt&\leq& \int_{t_0}^{T}( S(t):=M(t)+(1+\frac{3\Vert A\Vert}{2\alpha}(\gamma+\frac{\beta}{t})t\xi(t))\Vert \nabla f(0)\Vert  \\
			&&+(1+\rho\Vert A\Vert +\frac{3\rho\Vert A\Vert^2}{2\alpha}(\gamma+\frac{\beta}{t})t\xi(t))\vert c\vert
			)(1+\Vert Z\Vert)dt.
		\end{eqnarray*}
		It is clear that $S(t)\in L_{loc}^{1}\left[t_0,+\infty\right)$ and for any $Z\in\mathcal{X}\times\mathcal{Y}\times\mathcal{X}\times\mathcal{Y}$, $F(\cdot, Z)\in L_{loc}^{1}([t_0,+\infty); \mathcal{X}\times\mathcal{Y}\times\mathcal{X}\times\mathcal{Y})$. According to \cite[Proposition 6.2.1]{bib24} and \cite[Theorem 5]{bib15}, the  Cauchy problem \eqref{equA1} has a unique global solution $Z\in W_{loc}^{1,1}([t_0,+\infty);\mathcal{X}\times\mathcal{Y}\times\mathcal{X}\times\mathcal{Y})$. Therefore, the dynamical system \eqref{equ2_1} has a unique strong global solution $(x, \lambda)$. This completes the proof of Theorem \ref{thm1}.
	\end{proof}

	\section{Some auxiliary results}\label{secB}

	\begin{lemma}{\rm{\cite[Lemma A.3]{bib21}}}\label{lemB.2}
		Suppose that $\delta>0$ and $\phi\in L^1\left(\delta,+\infty\right)$ is a non-negative and continuous function, and $\psi:\left[\delta,+\infty\right)\rightarrow\left(0,+\infty\right)$ is a non-decreasing function such that $\lim\limits_{t\rightarrow+\infty}\psi(t)=+\infty$. Then,
		\begin{equation*}
			\lim\limits_{t\rightarrow+\infty}\frac{1}{\psi(t)}\int_{\delta}^{t}\psi(s)\phi(s)ds =0.
		\end{equation*}
	\end{lemma}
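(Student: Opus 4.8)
The plan is to prove the averaging result Lemma \ref{lemB.2}, which states that for $\delta>0$, a nonnegative continuous $\phi\in L^1(\delta,+\infty)$, and a nondecreasing $\psi:[\delta,+\infty)\to(0,+\infty)$ with $\psi(t)\to+\infty$, one has $\frac{1}{\psi(t)}\int_\delta^t \psi(s)\phi(s)\,ds\to 0$. The core idea is a classical ``split the tail'' argument: since $\phi$ is integrable, its contribution beyond any large time can be made uniformly small, while the contribution up to that fixed time, though weighted by $\psi$, is divided by $\psi(t)\to+\infty$ and hence vanishes.

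First I would fix an arbitrary $\eta>0$. Because $\phi\in L^1(\delta,+\infty)$, the tail integral is small: there exists $T\ge\delta$ such that $\int_T^{+\infty}\phi(s)\,ds<\eta$. I would then split the integral at $T$, writing for $t\ge T$
\begin{equation*}
\frac{1}{\psi(t)}\int_\delta^t \psi(s)\phi(s)\,ds=\frac{1}{\psi(t)}\int_\delta^T \psi(s)\phi(s)\,ds+\frac{1}{\psi(t)}\int_T^t \psi(s)\phi(s)\,ds.
\end{equation*}
For the second term I would exploit monotonicity of $\psi$: since $\psi$ is nondecreasing, $\psi(s)\le\psi(t)$ for every $s\in[T,t]$, so $\frac{1}{\psi(t)}\int_T^t\psi(s)\phi(s)\,ds\le\int_T^t\phi(s)\,ds\le\int_T^{+\infty}\phi(s)\,ds<\eta$. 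This bounds the tail piece by $\eta$ uniformly in $t$.

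For the first term, the quantity $\int_\delta^T\psi(s)\phi(s)\,ds$ is a fixed finite constant (finite because $\psi$ is bounded on the compact interval $[\delta,T]$ and $\phi$ is continuous there, hence $\psi\phi$ is bounded and integrable). Since $\psi(t)\to+\infty$, there exists $T'\ge T$ such that $\frac{1}{\psi(t)}\int_\delta^T\psi(s)\phi(s)\,ds<\eta$ for all $t\ge T'$. Combining the two estimates gives $\frac{1}{\psi(t)}\int_\delta^t\psi(s)\phi(s)\,ds<2\eta$ for all $t\ge T'$. As $\eta>0$ was arbitrary, the limit is zero, completing the proof.

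The argument is almost entirely routine; there is no serious obstacle. The only point requiring mild care is the bookkeeping of quantifiers, namely choosing $T$ from the integrability of $\phi$ \emph{before} choosing $T'$ from the divergence of $\psi$, so that the fixed constant $\int_\delta^T\psi(s)\phi(s)\,ds$ is already determined when we invoke $\psi(t)\to+\infty$. One should also confirm that $\psi\phi$ is genuinely integrable on $[\delta,T]$ (immediate from continuity of $\phi$ and local boundedness of the nondecreasing $\psi$), ensuring every expression written is well defined.
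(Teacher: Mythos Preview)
Your proof is correct and is the standard ``split the tail'' argument for this type of averaging lemma. Note that the paper does not actually prove this statement; it is simply quoted from \cite[Lemma~A.3]{bib21} as an auxiliary result, so there is no in-paper proof to compare against.
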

	
	
	
	
\end{appendices}



\begin{thebibliography}{}
	\bibitem{bib30}
	Adly S, Attouch H.  Accelerated optimization through time-scale analysis of inertial dynamics with asymptotic vanishing and Hessian-driven dampings. Optimization,  \href{https://doi.org/10.1080/02331934.2024.2359540}{Doi: 10.1080/02331934.2024.2359540}.
	
	\bibitem{bib50}
	Aujol JF, Dossal C, Ho{\`a}ng VH, Labarri{\`e}re H, Rondepierre A. Fast convergence of inertial dynamics with Hessian-driven damping under geometry assumptions. Appl Math Optim 2023; 88(3): 81. 
	%
	\bibitem{bib37}
	Alecsa CD, L{\'a}szl{\'o} SC. Tikhonov regularization of a perturbed heavy ball system with vanishing damping. SIAM J Optim 2021; 31: 2921-2954.
	%
	%
\bibitem{bib19}
Alecsa CD, L{\'a}szl{\'o} SC, Pin\c{t}a T. An extension of the second order dynamical system that models Nesterov's convex gradient method. Appl Math Oper Res 2021; 84: 1687-1716.

\bibitem{bib4}
Attouch H, Chbani Z, Riahi H. Rate of convergence of the Nesterov accelerated gradient method in the subcritical case $\alpha \leq 3$. ESAIM Control Optim Calc Var 2019; 25: 2.

\bibitem{bib15}
Attouch H, Chbani Z, Fadili J, Riahi H. Fast convergence of dynamical ADMM via time scaling of damped inertial dynamics. J Optim Theory Appl 2022; 193: 704-736.

\bibitem{bib10}
Attouch H, Chbani Z, Riahi H. Fast proximal methods via time scaling of damped inertial dynamics. SIAM J Optim 2019; 29: 2227-2256.

\bibitem{bib21}
Attouch H, Chbani Z, Riahi H. Combining fast inertial dynamics for convex optimization with Tikhonov regularization. J Math Anal Appl 2018; 457: 1065-1094.

\bibitem{bib18}
Attouch H, Peypouquet J, Redont P. Fast convex optimization via inertial dynamics with Hessian driven damping. J Differ Equ 2016; 261: 5734-5783.

\bibitem{bib22}
Attouch H, Czarnecki MO. Asymptotic control and stabilization of nonlinear oscillators with non-isolated equilibria. J Differ Equ 2002; 179: 278-310.

\bibitem{bib38}
Attouch H, Balhag A, Chbani Z, Riahi H. Damped inertial dynamics with vanishing Tikhonov regularization: strong asymptotic convergence towards the minimum norm solution. J Differ Equ 2022; 311: 29-58.

\bibitem{bib29}
Attouch H, Balhag A, Chbani Z, Riahi H. Accelerated gradient methods combining Tikhonov regularization with geometric damping driven by the Hessian. Appl Math Optim 2023; 88: 29.
	
\bibitem{bib51}
Attouch H, Chbani Z, Fadili J, Riahi H. First-order optimization algorithms via inertial systems with Hessian driven damping. Math Program 2022; 193: 113-155.

\bibitem{bib26}
Boyd S, Parikh N, Chu E, Peleato B, Eckstein J. Distributed optimization and statistical learning via the alternating direction method of multipliers. Found Trends Mach Learn 2011; 3: 1-122.

\bibitem{bib3}
Beck A, Teboulle M. A fast iterative shrinkage-thresholding algorithm for linear inverse problems. SIAM J Imaging Sci 2009; 2: 183-202.

\bibitem{bib16}
Bo{\c{t}} RI, Nguyen DK. Improved convergence rates and trajectory convergence for primal-dual dynamical systems with vanishing damping. J Differ Equ 2021; 303: 369-406.

\bibitem{bib40}
Bo{\c{t}} RI, Csetnek ER, L{\'a}szl{\'o} SC. Tikhonov regularization of a second order dynamical system with Hessian driven damping. Math Program 2021; 189: 151-186.

\bibitem{bib11}
Bo{\c{t}} RI, Csetnek ER, Nguyen DK. Fast augmented Lagrangian method in the convex regime with convergence guarantees for the iterates. Math Program 2023; 200: 147-197.

\bibitem{bib18-1}
Cominetti R, Peypouquet J, Sorin S. Strong asymptotic convergence of evolution equations governed by maximal monotone operators with Tikhonov regularization. J Differ Equ 2008; 245: 3753-3763.

\bibitem{bib8}
Cand{\`e}s EJ, Wakin MB. An introduction to compressive sampling. IEEE Trans Signal Process 2008; 25: 21-30.

\bibitem{bib28}
Goldstein T, O'Donoghue B, Setzer S, Baraniuk R. Fast alternating direction optimization methods. SIAM J Imaging Sci 2014; 7: 1588-1623.

\bibitem{bib24}
Haraux A. Systemes dynamiques dissipatifs et applications. Rech Math Appl 1991; 17: Masson, Paris.

\bibitem{bib14}
He X, Hu R, Fang YP. Convergence rates of inertial primal-dual dynamical methods for separable convex optimization problems. SIAM J Control Optim 2021; 59: 3278-3301.

\bibitem{HeTAC2022}
He X, Hu R, Fang YP. ``Second-order primal'' + ``first-order dual'' dynamical systems with time scaling for linear equality constrained convex optimization problems. IEEE Trans Automat Control 2022; 67: 4377-4383.

\bibitem{bib17}
He X, Hu R, Fang YP. Fast primal-dual algorithm via dynamical system for a linearly constrained convex optimization problem. Automatica 2022; 146: 110547.

\bibitem{bib20}
He X, Tian F, Li A, Fang YP. Convergence rates of mixed primal-dual dynamical systems with Hessian driven damping. Optimization 2025; 74: 365-390.

\bibitem{bib9}
He X, Hu R, Fang YP. Inertial accelerated primal-dual methods for linear equality constrained convex optimization problems. Numer Algorithms 2022; 90: 1669-1690.

\bibitem{bib12}
He X, Hu R, Fang YP. Inertial primal-dual dynamics with damping and scaling for linearly constrained convex optimization problems. Appl Anal 2022; 102: 4114-4139.

\bibitem{bib27}
Lin Z, Li H, Fang C. Accelerated optimization for machine learning. Springer, Singapore 2020.

\bibitem{bib36}
L{\'a}szl{\'o} SC. On the strong convergence of the trajectories of a Tikhonov regularized second order dynamical system with asymptotically vanishing damping. J Differ Equ 2023; 362: 355-381.

\bibitem{bib6}
Liu Q, Yang S, Hong Y. Constrained consensus algorithms with fixed step size for distributed convex optimization over multi-agent networks. IEEE Trans Autom Control 2017; 62: 4259-4265.

\bibitem{bib25}
Li HL, Hu R, He X, Xiao YB. A General Mixed-Order Primal-Dual Dynamical System with Tikhonov Regularization. J Optim Theory Appl 2025; 207(2): 1-34.

\bibitem{bib5}
May R. Asymptotic for a second-order evolution equation with convex potential and vanishing damping term. Turkish J Math 2017; 41: 681-685.

\bibitem{bib2}
Nesterov Y. A method of solving a convex programming problem with convergence rate $\mathcal{O}(\frac{1}{k^2})$. Sov Math Dokl 1983; 27: 372-376.

\bibitem{bib1}
Su W, Boyd S, Cand{\`e}s E. A differential equation for modeling Nesterov's accelerated gradient method: theory and insights. J Mach Learn Res 2016; 17: 1-43.

\bibitem{bib7}
Shi G, Johansson KH. Randomized optimal consensus of multi-agent systems. Automatica 2012; 48: 3018-3030.

\bibitem{bib31}
Tikhonov AN. Solution of incorrectly formulated problems and the regularization method. Sov Dok 1963; 4: 1035-1038.

\bibitem{bib53}
Wang K, Jiang C, Ng AKY, Zhu Z. Air and rail connectivity patterns of major city clusters in China. Transp Res A Policy Pract 2020; 139: 35-53.

\bibitem{bib13}
Zeng X, Lei J, Chen J. Dynamical Primal-Dual Nesterov Accelerated Method and Its Application to Network Optimization. IEEE Trans Automat Control 2022; 68: 1760-1767.

\bibitem{bib54}
Zheng S, Wang K, Dong K, Wan Y, Fu X. Does the shipping alliance aggravate or alleviate container shipping market volatility. Transp Res A Policy Pract 2024; 189: 104231.

\bibitem{bib23}
Zhu TT, Hu R, Fang YP. Fast convergence rates and trajectory convergence of a Tikhonov regularized inertial primal--dual dynamical system with time scaling and vanishing damping. J Comput Appl Math 2025; 460: 116394.
	

		
		
	\end{thebibliography}
\end{document}